\let\today\relax
\def\ps@pprintTitle{%
    \let\@oddhead\@empty
    \let\@evenhead\@empty
    \def\@oddfoot{\footnotesize\itshape
         {} \hfill\today}%
    \let\@evenfoot\@oddfoot
    }
\newtheorem{theorem}{Theorem}
\def\supp{\mathrm{supp}}
\def\sn{{| \! | \! |}}
\def\<{\leqslant}           % nice less than or equal to sign
\def\>{\geqslant}           % nice larger than or equal to sign
\def\d{\partial}
\def\wh{\widehat}
\def\Re{{\rm Re}}   % real part
\def\cH{{\cal H}}   % Hardy space
\def\mZ{\mathbb{Z}}    % set of integers
\def\mR{\mathbb{R}}    % real line
\def\mC{\mathbb{C}}    % complex plane
\def\mH{\mathbb{H}}    % complex plane
\def\Tr{{\rm Tr}}       % matrix trace
\def\rT{{\rm T}}        % matrix transpose
\def\rF{\mathrm{F}}        % matrix transpose
\def\bE{{\bf E}}    % expectation
\def\[[[{[\![\![}
\def\]]]{]\!]\!]}
\def\bra{{\langle}}
\def\ket{{\rangle}}
\def\re{{\rm e}}        % number e
\def\rd{{\rm d}}        % differential
\def\bA{\mathbf{A}}
\def\x{\times}
\def\fS{\mathfrak{S}}
\def\bD{{\mathbf D}}
\def\cF{{\cal F}}
\def\cov{{\bf cov}}
\def\cN{{\cal N}}
\def\cS{{\mathcal S}}
\def\cT{\mathcal{T}}
\def\mT{\mathbb{T}}
\def\eps{\epsilon}
\def\Ups{\Upsilon}
\def\ups{\upsilon}
\begin{document}
%%%%%%%%%%%%%%%%%%%%%%%%%%%%%%%%%%%%%%%%%%%%%%%%%%%%%%%%%%%%%%%%%%%%%%%%%%%%%%%%%%%%%%%%%%%%%%%%%%%

\begin{frontmatter}

%% Title, authors and addresses

%% use the tnoteref command within \title for footnotes;
%% use the tnotetext command for the associated footnote;
%% use the fnref command within \author or \address for footnotes;
%% use the fntext command for the associated footnote;
%% use the corref command within \author for corresponding author footnotes;
%% use the cortext command for the associated footnote;
%% use the ead command for the email address,
%% and the form \ead[url] for the home page:
%%
%% \title{Title\tnoteref{label1}}
%% \tnotetext[label1]{}
%% \author{Name\corref{cor1}\fnref{label2}}
%% \ead{email address}
%% \ead[url]{home page}
%% \fntext[label2]{}
%% \cortext[cor1]{}
%% \address{Address\fnref{label3}}
%% \fntext[label3]{}
%%%%%%%%%%%%%%%%%%%%%%%%%%%%%%%%%%%%%%%%%%%%%%%%%%%%%%%%%%%%%%%%%%%%%%%%%%%%%%%%%%%%%%%%%%%%%%%%%%%
\title{{\bf %On Extending the
Anisotropy-based Robust Performance Criteria for Statistically Uncertain Linear Continuous Time  Invariant Stochastic Systems}}

%\title{{\bf Comments on a continuous time  extension  of the anisotropy-based theory}\footnotemark}
%%%%%%%%%%%%%%%%%%%%%%%%%%%%%%%%%%%%%%%%%%%%%%%%%%%%%%%%%%%%%%%%%%%%%%%%%%%%%%%%%%%%%%%%%%%%%%%%%%%
%% use optional labels to link authors explicitly to addresses:
%% \author[label1,label2]{<author name>}
%% \address[label1]{<address>}
%% \address[label2]{<address>}
%%%%%%%%%%%%%%%%%%%%%%%%%%%%%%%%%%%%%%%%%%%%%%%%%%%%%%%%%%%%%%%%%%%%%%%%%%%%%%%%%%%%%%%%%%%%%%%%%%%
\author[address]{Igor G. Vladimirov}
%%%%%%%%%%%%%%%%%%%%%%%%%%%%%%%%%%%%%%%%%%%%%%%%%%%%%%%%%%%%%%%%%%%%%%%%%%%%%%%%%%%%%%%%%%%%%%%%%%%
\address[address]{Australian National University,
        Canberra, Australia,
        E-mail:
            {\tt igor.g.vladimirov@gmail.com}
}
%%%%%%%%%%%%%%%%%%%%%%%%%%%%%%%%%%%%%%%%%%%%%%%%%%%%%%%%%%%%%%%%%%%%%%%%%%%%%%%%%%%%%%%%%%%%%%%%%%%
\begin{abstract}
This paper is concerned with robust performance criteria for linear continuous time invariant stochastic systems  driven by statistically uncertain random processes. The uncertainty is understood as the deviation of imprecisely known probability distributions of the input disturbance from those of the standard Wiener process.
Using a one-parameter family of conformal maps of the unit disk in the complex plane onto the right half-plane for discrete and continuous time transfer functions, the deviation from the nominal Gaussian white-noise model is quantified by the mean anisotropy for the input of a discrete-time  counterpart of the original system. The parameter of this conformal correspondence specifies the time scale for filtered versions of the input and output of the system, in terms of which
the worst-case root mean square gain is formulated subject to an upper constraint on the mean anisotropy.
The resulting two-parameter counterpart of the anisotropy-constrained norm of the system for the continuous time case is amenable to state-space computation using the methods of the anisotropy-based theory of stochastic robust filtering and control, originated by the author in the mid 1990s.
\end{abstract}

%%%%%%%%%%%%%%%%%%%%%%%%%%%%%%%%%%%%%%%%%%%%%%%%%%%%%%%%%%%%%%%%%%%%%%%%%%%%%%%%%%%%%%%%%%%%%%%%%%%

\begin{keyword}
linear stochastic system
\sep
statistical uncertainty
\sep
root mean square gain
\sep
conformal map
\sep
mean anisotropy
\sep
anisotropic norm.

\MSC %code \sep code
93C05          % Linear systems
\sep
93C35          % Multivariable systems
\sep
60H10          % Stochastic ordinary differential equations
\sep
93D25          % Input-output approaches
\sep
49N10          % Linear-quadratic problems
\sep
93E20          % Optimal stochastic control
\sep
93B52.          % Feedback control
\end{keyword}

%%%%%%%%%%%%%%%%%%%%%%%%%%%%%%%%%%%%%%%%%%%%%%%%%%%%%%%%%%%%%%%%%%%%%%%%%%%%%%%%%%%%%%%%%%%%%%%%%%%
\end{frontmatter}
\footnotetext{}

\section{Introduction}

The idea of quantifying the ``amplification'' (or ``attenuation'') properties of a linear operator from one normed space to another in terms of its induced norm is ubiquitous in functional analysis, matrix theory and numerical methods, to mention some of the relevant areas. 
Using such a norm for operators on Hilbert spaces underlies the $\cH_\infty$-control theory \cite{F_1987} for linear systems with square integrable inputs.
When it is preferable to have low sensitivity of the output variables of the system to the input disturbances, the corresponding performance criteria are concerned with stabilizing the closed-loop system (thus making it a bounded input-output operator) and minimizing its operator norm by an appropriate choice of a controller \cite{DGKF_1989}. This includes not only control settings as such, but also filtering problems, where the role of the output process is played by the state estimation error \cite{NK_1991}.
Due to submultiplicativity of the operator norm,  its minimization leads not only to disturbance attenuation with respect to the input but also with respect to perturbations in the system itself. The small-gain theorem \cite{Z_1981} models such perturbations as feedback loops with norm-bounded uncertainties and is closely related to the invertibility  of a perturbed identity operator and von Neumann series in Banach algebras \cite{DS_1958}.

The Rayleigh quotient \cite{HJ_2007}, whose maximization defines the induced norm of a bounded operator on a Hilbert space (or a matrix in a finite-dimensional case), reduces to a quadratic form on the unit sphere. In a generic case, its  maximum over the sphere is achieved at two points, which specify the worst-case direction for the input. This direction is exceptional and is not necessarily taken by the unit vector  of the normalised input disturbance. Therefore, the operator norm is a conservative measure of the gain in the case when the disturbance is not targeted at specific directions. At its extreme, this suggests the averaging of the Rayleigh quotient over the uniform probability distribution on the unit sphere, which leads to an appropriately rescaled Frobenius norm of the operator. This approach is applicable only to a finite-dimensional Hilbert space, where the unit sphere has a (unique up to a multiplicative  constant) finite Haar measure \cite{A_1963}, invariant under the group of rotations, whose normalization yields the uniform distribution (there is no such distribution on an infinite-dimensional sphere, which is closely related to its noncompactness).

The deviation of an arbitrary probability measure on the sphere from the uniform distribution (as a reference measure) can be quantified in terms of entropy-theoretic proximity criteria. For example, the Kullback-Leibler relative entropy \cite{CT_2006} (with respect to the uniform distribution on the sphere) leads to the anisotropy functional \cite{V_1995a}. If the input disturbance is random and its direction distribution is ``nearly'' uniform in the sense that its anisotropy does not exceed a given nonnegative level $a$, then the maximization of the averaged Rayleigh quotient over such distributions leads to the $a$-anisotropic norm of the matrix. This norm occupies an intermediate position between the rescaled Frobenius norm, mentioned above, and the operator norm. Moreover, the latter two norms are the extreme cases of the $a$-anisotropic norm at $a=0$ and as $a\to +\infty$, respectively. The advantage of this norm is that its conservativeness is controlled by the parameter $a$ which specifies the amount of statistical uncertainty in the direction distribution (with the uniform distribution playing the role of a ``centre'' of the uncertainty class). An equivalent interpretation is that $a$ quantifies how far a hypothetical opponent can go in approximating the worst-case direction (corresponding to the largest singular value of the matrix) by an absolutely continuous probability distribution on the unit sphere. Although the anisotropy functional does not lend itself to closed-form calculation even for the direction distribution  of a Gaussian random vector, its asymptotic growth rate (per time step) is computable for unboundedly growing fragments of stationary Gaussian random sequences, leading to the mean anisotropy \cite{V_1995a}.

Later, the ``spherical'' anisotropy functional 
was complemented with  its more tractable counterpart \cite{V_2006}  (on the space of inputs themselves  rather than the unit sphere of their directions) in the form of the minimum relative entropy of the actual probability distribution with respect to isotropic Gaussian distributions,
which coincides with a multivariable  version of a power-entropy construct considered in a different context and for different purposes in \cite{B_1998}. Although this second anisotropy functional is an upper bound for the original one, it has  the same infinite-horizon growth rate in the stationary Gaussian case mentioned above, with the resulting mean anisotropy being expressed in terms of the spectral density and the mean value of the sequence. 
Accordingly, the averaged Rayleigh quotient was replaced with the ratio of root-mean-square (RMS) values of the output and input.

The anisotropy functionals, the anisotropy-constrained norms and the anisotropy-based theory of stochastic robust control and filtering (for systems without internal perturbations)   were originated by the author  in a series of papers and research reports in the mid 1990s -- early 2000s, including
\cite{V_1995a,V_1994,V_1995b,V_1999,V_1996a,V_1996b,V_1997,V_2001a,V_2001b,V_2001c,V_2003,V_2006,V_2008} and a set of MATLAB functions for anisotropy-based robust performance analysis and controller design. Being
motivated as a stochastic extension of the $\cH_\infty$-control theory to statistically uncertain linear discrete-time systems (and Toeplitz operators acting on homogeneous Gaussian random fields on multidimensional lattices), this development aimed to bridge the gap  between the deterministic approach of $\cH_\infty$-control and the stochastic paradigm of linear quadratic Gaussian (LQG) control, which includes Kalman filtering \cite{K_1960} and its predecessor --- the Kolmogorov-Wiener-Hopf theory of smoothing, filtering and prediction  for stationary random processes \cite{K_1941,W_1949}. This extension also addressed the robustness issues of the LQG approach which is oriented to an idealised scenario, when the random input disturbances have precisely known statistical characteristics and are organised as a Gaussian white-noise sequence or a standard Wiener process  (from which a more complicated covariance structure can be  obtained by using a shaping filter).

Not discussing here the other ways of combining the $\cH_\infty$-theory and LQG approaches (see, for example, \cite{BH_1989}), we note  that
the anisotropy-based theory offers probabilistic, system theoretic and computational tools (including a homotopy method for solving specific sets of cross-coupled Riccati, Lyapunov and log-determinant matrix algebraic equations) for robust performance analysis and synthesis of controllers and filters. This approach  addresses robustness with respect to spatially  and temporally coloured random disturbances\footnote{in the sense of statistical correlations between the entries of a multivariable noise at the same or different moments of time} with imprecisely known statistical properties, with the statistical uncertainty (as a deviation from the nominal Gaussian white noise model) being quantified in terms of anisotropy, and the anisotropic norm describing the worst-case RMS gain of the system over this uncertainty class.

Subsequently (in the 2000s and more recently), the theory  was being developed in the form of a suboptimal guaranteed approach to 
systems with internal perturbations \cite{KM_2004}, towards convexification of the anisotropy-based control synthesis \cite{MKV_2011,TKT_2011,T_2012b} and suboptimal observer design \cite{T_2013,TTKK_2018}, and also taking into account  nonzero-mean input disturbances  \cite{KT_2017} and multiplicative noise \cite{K_2018} in the system, to mention some of the developments.
The results of the anisotropy-based theory have been adapted to descriptor systems in \cite{BAK_2018}\footnote{where there are mathematical and bibliographic errors and inaccuracies:  for example, \cite[Definition 3.1 on p. 61]{BAK_2018} introduces erroneous dimensions of vectors and corresponding spaces in the definition of anisotropy and a wrong  sign of the differential entropy; \cite[p. 62]{BAK_2018} specifies a wrong analyticity domain for transfer functions; the order of noncommuting matrix factors in the factorization of spectral densities in \cite[Eq. (3.5) on p. 62]{BAK_2018}  is incorrect, and the same error is present in the proof of \cite[Theorem 4.1 on pp. 103--105]{BAK_2018}; there is an incorrect interpretation of some of the sets participating in \cite[Lemma 4.1 on p. 100]{BAK_2018}, which incorrectly mentions a ``saddle point of a set-valued map''   whereas the lemma is concerned with a fixed point of a set-valued map, describing a saddle point of the minimax problem; the first work on the anisotropy-based theory was \cite{V_1995a}, which was written by the author in late 1993 -- early 1994, presented soon afterwards  to M.S.Pinsker, A.Yu.Veretennikov and M.L.Kleptsyna at a stochastic analysis seminar  in the Institute for Information Transmission Problems, the Russian Academy of Sciences, and communicated to the Doklady Mathematics by Ya.Z.Tsypkin on 3 February 1994, and not    its subsequent conference version \cite{V_1994}}
 Alternative proximity measures 
 (such as the relative Renyi entropy and Hellinger distance) instead of the Kullback-Leibler informational divergence in application to the anisotropy functional are discussed in \cite{C_2018}.\footnote{where there is also an incorrect use of concepts of the  anisotropy-based theory: for example, in \cite[Eq. (1)]{C_2018} and throughout the paper, the anisotropy of a random vector, which is scale invariant and hence not positively homogeneous (in contrast to any norm), is confused with the anisotropic norm of linear operators with respect to random inputs}

The discrete-time setting is essential for the anisotropy-based theory, which
 stems from the anisotropy functionals using 
  finite-dimensional spheres and finite segments of random sequences. At the same time, entropy-theoretic constructs are also known for infinite-dimensional objects such as
 diffusion processes. For example, the relative entropy (with respect to the Wiener measure) is correctly defined for the probability distribution of a diffusion process governed by a stochastic differential equation (SDE) with a nonzero drift (satisfying mean square integrability conditions) and the identity diffusion matrix. The preservation of the diffusion matrix of the standard Wiener process is essential for the absolute continuity of measures
 and applicability of the Girsanov theorem \cite{G_1960} and thus plays a part in 
  relative entropy  formulations of statistical uncertainty for continuous time stochastic control  systems driven by diffusion processes \cite{CR_2007,PUJ_2000}.

The present paper discusses one of possible ways of extending the anisotropy-based approach (which underlies stochastic minimax formulations of robust filtering and control using worst-case RMS gains under entropy-theoretic constraints in terms of anisotropy) to linear continuous time invariant (LCTI) systems driven by statistically uncertain Gaussian diffusion processes. To this end, we employ a parametric family of conformal maps (specified by an auxiliary time scale parameter and related to the Cayley transform  \cite{S_1992}) between the unit disk and the right half-plane for the discrete and continuous time transfer functions. Up to a multiplicative constant, these maps are identical to Tustin's transform \cite{B_2009} for converting LCTI systems to linear discrete-time invariant (LDTI) systems.  This results in a subsidiary LDTI system which lends itself to application of the anisotropy-based theory. The spectral densities of the input and output of this subsidiary system are related to those of filtered versions of the  original continuous-time processes (with the time scale parameter specifying the transient time of the filter) due to the relation between the spectral density of the Ornstein-Uhlenbeck (OU) process \cite{KS_1991} and the logarithmic derivative of the Cayley map.

The conformal correspondence is closely related to the presence of a denominator (which improves  integrability in the continuous time case) in the inner-outer factorizability condition for spectral densities (describing the physical realizability of such a density with the aid of a stable causal shaping filter) \cite{GS_2004,W_1972}. This is one of the reasons why discrete-time results (many of which rely on boundedness of the frequency range) cannot merely be adopted without due modification (in particular, without taking the denominator into account)   for the continuous-time case, where the processes may contain arbitrarily fast components  accommodated by the infinite frequency range.  Ignoring this qualitative distinction leads to divergent integrals in \cite{B_2018,BK_2018},\footnote{which makes the definition of ``$\sigma$-entropy'' as the quantity ``$-\frac{1}{2} \int_\mR \ln\det \frac{S(\omega)}{\int_\mR \Tr S(\lambda) \rd \lambda}\rd \omega$'' in \cite[Eq. (5)]{B_2018} and \cite[Eq. (19)]{BK_2018} incorrect because of nonexistence of an integrable spectral density $S: \mR\to \mC^{m\x m}$ on the real axis with an integrable log-determinant in view of the relations $\int_{-b}^b(\Tr S(\omega)-\ln\det S(\omega))\rd \omega \> 2bm \to +\infty$ as $b\to +\infty$, whereby $\int_\mR \Tr S(\omega)\rd \omega <+\infty$ implies $\int_\mR \ln\det S(\omega)\rd \omega  =-\infty$, and the same integral divergence issue makes \cite[Theorem~1]{B_2018} and  \cite[Theorems~3, 4]{BK_2018} incorrect in addition to the log-determinant being undefined for rank-one matrices of orders greater than one in \cite{B_2018}} and the correction of some of the errors mentioned above is one of the secondary purposes of the present paper.

The paper is organised as follows.
Section~\ref{sec:sys} specifies a class of LCTI systems governed by linear SDEs driven by Ito processes.  
Section~\ref{sec:filt} describes filtered versions of the underlying processes for such a system.
Section~\ref{sec:RMS} defines the RMS gain of the system in terms of the filtered input and output.
Section~\ref{sec:IWN} calculates the RMS gain in the nominal case of isotropic white-noise disturbances.
Section~\ref{sec:conf} specifies a parameter-dependent conformal correspondence to a discrete-time system with the same RMS gain.
Section~\ref{sec:an} defines a two-parameter norm of the underlying system in terms of the anisotropic norm of the effective LDTI system.
Section~\ref{sec:state} describes state-space equations for the continuous-time extension of the anisotropic norm along with the structure of the worst-case disturbance.
Section~\ref{sec:ex} provides a numerical example which demonstrates the computation of the norm.
Section~\ref{sec:conc} makes concluding remarks.

%%%%%%%%%%%%%%%%%%%%%%%%%%%%%%%%%%%%%%%%%%%%%%%%%%%%%%%%%%%%%%%%%%%%%%%%%%%%%%%%%%%%%%%%%
\section{Linear stochastic systems being considered}
\label{sec:sys}
%%%%%%%%%%%%%%%%%%%%%%%%%%%%%%%%%%%%%%%%%%%%%%%%%%%%%%%%%%%%%%%%%%%%%%%%%%%%%%%%%%%%%%%%%

Consider an LCTI 
system with an $\mR^m$-valued input $W$ and $\mR^p$-valued output $Z$. It is assumed that $W$ and $Z$ are Ito processes \cite{KS_1991} with respect to a common filtration 
on the underlying probability space and are related in a causal linear fashion:
\begin{equation}
\label{ZfW}
    Z(t)
    =
    \int_{-\infty}^t
    f(t-\tau)
    \rd W(\tau)
    +
        f_0 W(t),
    \qquad
    t\in \mR.
\end{equation}
Here, $f_0 \in \mR^{p\x m}$ is a static gain matrix, and $f: \mR_+ \to \mR^{p\x m}$ is the decaying part of the  step response of the system, with $\mR_+:= [0,+\infty)$ the set of nonnegative real numbers. In order to ensure the existence of the Ito integral in (\ref{ZfW}) in the case when $W$ is a standard Wiener process \cite{KS_1991} in $\mR^m$, the function $f$ is assumed to be square integrable. In this case,  the Ito isometry yields
\begin{equation}
\label{Itiso}
    \bE
    \Big(
    \Big|
    \int_{-\infty}^t
    f(t-\tau)
    \rd W(\tau)
    \Big|^2
    \Big)
    =
    \int_{\mR_+}
    \|f(\tau)\|_\rF^2 \rd \tau
    <
    +\infty
\end{equation}
for any $t \in \mR$,
where $\bE(\cdot)$ is expectation, and $\|\cdot\|_{\rF}$ is the Frobenius norm of matrices \cite{HJ_2007}. If
$f(\tau)$ is a linear combination of quasi-polynomials  \cite{F_1985}  which decay exponentially fast as $\tau\to +\infty$, there exist constant matrices $A \in \mR^{n\x n}$, $B\in \mR^{n\x m}$, $C \in \mR^{p\x n}$, with $A$ Hurwitz, such that
\begin{equation}
\label{fABC}
    f(\tau)
    =
    CA^{-1}\re^{\tau A}B,
    \qquad
    \tau \>0.
\end{equation}
Then the stochastic differential of the output process $Z$  takes the form
\begin{align}
\nonumber
    \rd Z(t)
    & =
    \Big(
    \int_{-\infty}^t
    f'(t-\tau)
    \rd W(\tau)
    \Big)
    \rd t
    +
    (f(0)+f_0) \rd W(t)\\
\label{dZ}
    & =
    CX(t)\rd t
    +
    D \rd W(t),
\end{align}
where
\begin{equation}
\label{XW}
    X(t)
    :=
    \int_{-\infty}^t
    \re^{(t-\tau)A}
    B
    \rd W(\tau),
\end{equation}
and
\begin{equation}
\label{DABC}
    D := f_0 + C A^{-1} B
\end{equation}
is the feedthrough $(p\x m)$-matrix.
The relation (\ref{dZ}) can be obtained by using the integration-by-parts  formula
$
    \int_{-\infty}^t
    f(t-\tau)
    \rd W(\tau)
    =
    f(0)W(t)
    +
    \int_{-\infty}^t
    f'(t-\tau)
    W(\tau)
    \rd \tau
$,
which, in view of (\ref{ZfW}), shows that the time derivative of (\ref{fABC}), given by
$
    f'(\tau) = C\re^{\tau A}B$
for all $
    \tau\>0
$,
is the impulse response for the map $W\mapsto Z$. The $\mR^n$-valued Ito process $X$ in (\ref{XW}) is an internal state of the system and satisfies the linear SDE
\begin{equation}
\label{dX}
    \rd X
    =
    AX \rd t  + B \rd W
\end{equation}
(the time arguments are omitted for brevity), whose solution is given by 
$    X(t)
    =
    \re^{(t-s)A}X(s)
    +
    \int_s^t \re^{(t-\tau) A}B\rd W(\tau)
$ for all $
    t\>s
$. 
The system state can be essentially  infinite-dimensional, as in the case of systems with delay \cite{KN_1986}. For example, such a system arises if the drift $AX(t)$ in (\ref{dX}) is replaced with a linear function of the past history of the process $X$:
\begin{equation}
\label{dXdelay}
    \rd X(t)
    =
    \Big(
    \int_{\mR_+}
    \vartheta(\rd \tau) X(t-\tau)
    \Big)
    \rd t
    +
    B
    \rd W(t),
\end{equation}
where $\vartheta$ is an $\mR^{n\x n}$-valued countably additive measure on the $\sigma$-algebra of Borel subsets of $\mR_+$, and the integral is understood pathwise. The SDE (\ref{dX}) is a particular case of (\ref{dXdelay}) with an atomic measure $\vartheta$, concentrated at the origin as  $\vartheta(S) = \chi_S(0) A$, with $\chi_S$ the indicator function of a set $S$. More generally, if $\vartheta$ is of bounded support, with $\supp \vartheta \subset [0,r]$ for some $r\>0$, the effective state of the system at time $t$ is the past history of $X$ over the time interval $[t-r,t]$.

Returning to the SDEs (\ref{dZ}), (\ref{dX}), note that there also are alternative ways to model the input-output operator. For example, the output channel can be in the form $Z = CX$, which is different from (\ref{dZ}). However, a convenient feature of the representation being considered is that both the input $W$ and the output $Z$ enter the SDEs (\ref{dZ}), (\ref{dX}) in a unified fashion (through their Ito increments), thus allowing such systems to be easily concatenated.

We will be concerned with a setting where the $\mR^m$-valued  input $W$ does not necessarily obey the  nominal model assumption of being a standard Wiener process (and hence, (\ref{Itiso}) is no longer valid). Rather, $W$ is assumed to be an Ito process with respect to an underlying standard Wiener process $V$ in $\mR^m$ (and hence, so also are
the $\mR^n$-valued state $X$ and the $\mR^p$-valued output $Z$ driven by $W$).
For example, the process $W$ can be modelled as the output of a causal LCTI system (playing the role of a shaping filter):
\begin{equation}
\label{WV}
    W(t)
    =
    \int_{-\infty}^t
    g(t-\tau) \rd V(\tau)
    +
    g_0 V(t)
\end{equation}
with a static gain matrix $g_0 \in \mR^{m \x m}$ and a square integrable decaying part $g: \mR_+ \to \mR^{m\x m}$ of the step response; see Fig.~\ref{fig:FG}.
%==============================================================================
\begin{figure}[htbp]
\unitlength=0.6mm
\linethickness{0.4pt}
\centering
\begin{picture}(110,15.00)
    \put(35,0){\framebox(10,10)[cc]{{$F$}}}
    \put(65,0){\framebox(10,10)[cc]{{$G$}}}
    \put(35,5){\vector(-1,0){20}}
    \put(65,5){\vector(-1,0){20}}
    \put(95,5){\vector(-1,0){20}}
    \put(10,5){\makebox(0,0)[cc]{$Z$}}
    \put(100,5){\makebox(0,0)[cc]{$V$}}

    \put(55,10){\makebox(0,0)[cc]{$W$}}

\end{picture}
\caption{The LCTI system $F$ with the output $Z$ and input $W$ produced by a causal shaping filter $G$ from the standard Wiener process $V$.}
\label{fig:FG}
\end{figure}
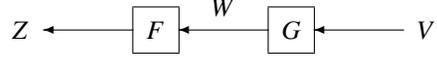
%==============================================================================
 Similarly to the system itself, the shaping filter  can also have a  finite-dimensional state-space representation
 \begin{equation}
 \label{dxi}
   \rd \xi = \alpha \xi \rd t + \beta \rd V,
   \qquad
   \rd W = c \xi \rd t + d \rd V,
 \end{equation}
 where $\xi$ is an $\mR^\nu$-valued Ito process, and $\alpha\in \mR^{\nu\x \nu}$, $\beta \in \mR^{\nu\x m}$, $c \in \mR^{m \x \nu}$, $d \in \mR^{m \x m}$, with $\alpha$ Hurwitz. In this case, similarly to (\ref{fABC}), (\ref{DABC}), the static gain matrix and the decaying part of the step response in (\ref{WV}) are given by
 \begin{equation}
 \label{gg}
    g_0 = d - c \alpha^{-1} \re^{\tau \alpha} \beta,
    \quad
    g(\tau) = c \alpha^{-1} \re^{\tau \alpha} \beta,
    \quad
    \tau \> 0.
 \end{equation}
Being a Gaussian Markov process with continuous sample paths and independent stationary increments satisfying
$
    \bE((V(t)-V(\tau))(V(t)-V(\tau))^\rT) = |t-\tau| I_m
$
(with $I_m$ the identity matrix of order $m$)
for any moments of time $t$, $\tau$, the standard Wiener process $V$ 
has almost surely nowhere differentiable sample paths of unbounded variation (and finite quadratic variation) over any (bounded) time interval \cite{KS_1991}. Such
nonsmoothness is inherited by the Ito processes $Z$, $X$, $W$ in (\ref{dZ}), (\ref{XW}), (\ref{WV}). Furthermore, in the case of nonzero static gain matrices $f_0$, $g_0$, the variances of $Z$, $W$ grow unboundedly over time (despite the stability of the system due to the matrix $A$ being Hurwitz).

%%%%%%%%%%%%%%%%%%%%%%%%%%%%%%%%%%%%%%%%%%%%%%%%%%%%%%%%%%%%%%%%%%%%%%%%%%%%%%%%%%%%%%%%%%%
\section{Filtered signals and transfer functions}
\label{sec:filt}
%%%%%%%%%%%%%%%%%%%%%%%%%%%%%%%%%%%%%%%%%%%%%%%%%%%%%%%%%%%%%%%%%%%%%%%%%%%%%%%%%%%%%%%%%%%

As a more realistic (in the sense of the variances) model of the input and output processes of the system and the shaping filter  in Fig.~\ref{fig:FG}, consider their filtered versions governed by the identical SDEs
\begin{align}
\label{dVT}
    \rd V_T
    & = - \Omega V_T\rd t + \sqrt{2\Omega }\rd V,\\
\label{dWT}
    \rd W_T
    & = - \Omega W_T\rd t + \sqrt{2\Omega }\rd W,\\
\label{dZT}
    \rd Z_T
    & = - \Omega Z_T\rd t + \sqrt{2\Omega}\rd Z,
\end{align}
where $T>0$ is an auxiliary time scale parameter which specifies the frequency
\begin{equation}
\label{Omega}
    \Omega
    :=
    \frac{1}{T}.
\end{equation}
Since $V$ is a standard Wiener process (to which $W$ in (\ref{WV}) reduces in the case of $g=0$, $g_0 = I_m$), the SDE (\ref{dVT})\footnote{when it is initialised at zero state in the infinitely distant past or at the invariant Gaussian distribution at time $t=0$} generates an OU process \cite{KS_1991} in $\mR^m$, which is a stationary Gaussian 
diffusion process with zero mean and covariance function
$
    \bE(V_T(t)V_T(\tau)^\rT)
    =
    \re^{-\Omega |t-\tau|}
    I_m
$ for all $
    t,\tau \in \mR
$,
so that the variance $\bE(|V_T(t)|^2) = m$ remains constant in time, and $T$ quantifies the correlation time of the process.\footnote{a characteristic time of decay in the correlation between the values of the process at different moments of time} Since we will be concerned with the infinite-horizon asymptotic behaviour of stable systems, it will be convenient to endow (\ref{dVT})--(\ref{dZT}) with zero initial conditions: $V_T(0) = 0$, $W_T(0) = 0$, $Z_T(0) = 0$.

The asymptotic input-output properties of the system, described by  (\ref{dZ}), (\ref{dX})  with a Hurwitz matrix $A$, and the shaping filter (\ref{WV}), can be formulated in terms of the transfer functions
\begin{equation}
\label{Ftrans}
    F(s):= C(sI_n - A)^{-1} B + D
\end{equation}
(analytic in a neighbourhood of the closed right half-plane $\mC_+:= \{s \in \mC:\ \Re s \>0\}$), and
\begin{equation}
\label{Gtrans}
    G(s)
    :=
    \int_0^{+\infty}
    \re^{-st}
    g'(t)
    \rd t
    +
    d =
    s
    \int_0^{+\infty}
    \re^{-st}
    g(t)
    \rd t
    +
    g_0.
\end{equation}
Here, for simplicity, the shaping filter is also assumed to be a stable system with a finite-dimensional state $\xi$ in (\ref{dxi}), (\ref{gg}),
so that 
(\ref{Gtrans}) is a rational function
\begin{equation}
\label{Gabcd}
  G(s) = c(sI_\nu - \alpha)^{-1} \beta + d,
\end{equation}
analytic in a neighbourhood of $\mC_+$.  Similarly to deterministic linear systems, the transfer functions  $F$, $G$  specify the linear relations
\begin{equation}
\label{ZFW_WGV}
    \wh{Z}(s) = F(s) \wh{W}(s),
    \qquad
    \wh{W}(s) = G(s) \wh{V}(s)
\end{equation}
between the Laplace transforms of the input and output in the case $X(0)=0$, $\xi(0) = 0$:
\begin{align}
\label{Vhat}
    \wh{V}(s)
    & :=
    \int_0^{+\infty}
    \re^{-st}
    \rd V(t),\\
\label{What}
    \wh{W}(s)
    & :=
    \int_0^{+\infty}
    \re^{-st}
    \rd W(t),\\
\label{Zhat}
    \wh{Z}(s)
    & :=
    \int_0^{+\infty}
    \re^{-st}
    \rd Z(t),
\end{align}
where the Ito integrals are well-defined for $\Re s >0$. The same relations
\begin{equation}
\label{ZTFWT_WTGVT}
    \wh{Z}_T(s) = F(s) \wh{W}_T(s),
    \qquad
    \wh{W}_T(s) = G(s) \wh{V}_T(s)
\end{equation}
hold for the usual Laplace transforms
\begin{align}
\label{VThat}
    \wh{V}_T(s)
    & :=
    \int_0^{+\infty}
    \re^{-st}
    V_T(t)
    \rd t
    =
    \Phi(s)\wh{V}(s),\\
\label{WThat}
    \wh{W}_T(s)
    & :=
    \int_0^{+\infty}
    \re^{-st}
    W_T(t)
    \rd t
    =
    \Phi(s)\wh{W}(s),\\
\label{ZThat}
    \wh{Z}_T(s)
    & :=
    \int_0^{+\infty}
    \re^{-st}
    Z_T(t)
    \rd t
    =
    \Phi(s)\wh{Z}(s)
\end{align}
of the filtered processes $V_T$, $W_T$, $Z_T$ in (\ref{dVT})--(\ref{dZT}); see Fig.~\ref{fig:FGT}.
Here,
\begin{equation}
\label{Phi}
    \Phi(s)
    :=
    \frac{\sqrt{2\Omega}}{s + \Omega}
    =
    \frac{\sqrt{2T}}{1 + Ts}
\end{equation}
specifies the common transfer function 
for these low-pass filters with the cutoff frequency $\Omega$  in (\ref{Omega}). Indeed, (\ref{ZTFWT_WTGVT}) follows from  (\ref{ZFW_WGV}), (\ref{VThat})--(\ref{ZThat}) since the scalar-valued function $\Phi$ commutes with any transfer matrix.  Although
$V_T$, $W_T$, $Z_T$ inherit nonsmoothness of sample paths from $V$, $W$, $Z$, they capture (approximately) only relatively slow components of the original processes (with frequencies $\ll \Omega$, or equivalently,  time scales $\gg T$).
%==============================================================================
\begin{figure}[htbp]
\unitlength=0.6mm
\linethickness{0.4pt}
\centering
\begin{picture}(110,45.00)
    \put(35,0){\framebox(10,10)[cc]{{$F$}}}
    \put(50,15){\framebox(10,10)[cc]{{$\Phi$}}}
    \put(80,15){\framebox(10,10)[cc]{{$\Phi$}}}
    \put(25,35){\vector(0,-1){10}}
    \put(55,35){\vector(0,-1){10}}
    \put(85,35){\vector(0,-1){10}}
    \put(55,15){\line(0,-1){10}}
    \put(85,15){\line(0,-1){10}}
    \put(25,15){\line(0,-1){10}}
    \put(65,5){\vector(-1,0){20}}
    \put(95,5){\vector(-1,0){20}}
    \put(35,5){\vector(-1,0){20}}
    \put(20,15){\framebox(10,10)[cc]{{$\Phi$}}}
    \put(35,30){\framebox(10,10)[cc]{{$F$}}}
    \put(65,30){\framebox(10,10)[cc]{{$G$}}}
    \put(65,0){\framebox(10,10)[cc]{{$G$}}}
    \put(35,35){\vector(-1,0){20}}
    \put(65,35){\vector(-1,0){20}}
    \put(95,35){\vector(-1,0){20}}

    \put(10,35){\makebox(0,0)[cc]{$Z$}}
    \put(10,5){\makebox(0,0)[cc]{$Z_T$}}
    \put(100,35){\makebox(0,0)[cc]{$V$}}
    \put(100,5){\makebox(0,0)[cc]{$V_T$}}

    \put(55,40){\makebox(0,0)[cc]{$W$}}
    \put(55,0){\makebox(0,0)[cc]{$W_T$}}

\end{picture}
\caption{The LCTI system $F$ with the output $Z$ and input $W$ produced by a causal shaping filter $G$ from the standard Wiener process $V$, and their filtered versions $Z_T$, $W_T$, $V_T$ obtained using the low-pass filter $\Phi$. The processes  $Z_T$, $W_T$, $V_T$ are related by the same input-output operators $F$, $G$ as $Z$, $W$, $V$ (cf. (\ref{ZFW_WGV})--(\ref{ZThat})).}
\label{fig:FGT}
\end{figure}
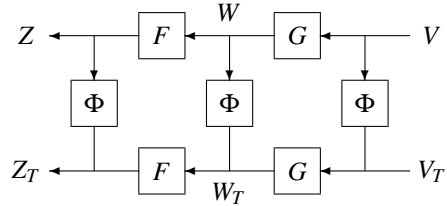
%==============================================================================

\section{Root-mean-square gain for filtered processes}
\label{sec:RMS}

For a given time scale parameter $T>0$, the RMS gain of the system under consideration
can be quantified
in terms of the filtered input and output $W_T$, $Z_T$  in (\ref{dWT}), (\ref{dZT}) by
\begin{equation}
\label{Fgain}
    \sn F\sn
    :=
    \limsup_{t\to +\infty}
    \sqrt{
    \frac{\int_0^t \bE (|Z_T(\tau)|^2)\rd \tau }
    {\int_0^t \bE (|W_T(\tau)|^2)\rd \tau}
    }
\end{equation}
(the trivial case of zero inputs is not considered). 
The following theorem shows that the upper limit in (\ref{Fgain}) exists as a limit. Its formulation employs an auxiliary function
\begin{align}
\nonumber
  \Sigma(\omega)
  & :=
  |\Phi(i\omega)|^2\\
\label{Sigma}
  & =
  \frac{2\Omega }{\Omega^2 + \omega^2}
  =
  \frac{2T }{1 +(\omega T)^2},
  \qquad
  \omega \in \mR,
\end{align}
which is related to (\ref{Phi})
and specifies the spectral density $\Sigma(\omega) I_m$ for the OU process $V_T$ in $\mR^m$. 
%%%%%%%%%%%%%%%%%%%%%%%%%%%%%%%%%%%%%%%%%%%%%%%%%%%%%%%%%%%%%%%%%%%%%%%%%%%%%%%%%%%%%%%%
\begin{theorem}
\label{th:RMS}
Suppose the system (\ref{dZ}), (\ref{dX}) and the shaping filter (\ref{dxi}) are stable. Then the RMS gain (\ref{Fgain}) holds as a limit and is computed as
\begin{align}
\nonumber
    \sn F\sn
    &=
    \sqrt{
    \frac{\lim_{t\to +\infty} \bE(|Z_T(t)|^2)}{\lim_{t\to +\infty} \bE(|W_T(t)|^2)}}\\
\label{limFgain}
    & =
    \sqrt{
    \frac{\int_{\mR} \Sigma(\omega) \bra \Lambda(\omega),  S(\omega)\ket \rd \omega}
    {\int_{\mR} \Sigma(\omega) \Tr S(\omega)\rd \omega}}.
\end{align}
Here, the function $\Sigma$ is given by (\ref{Sigma}),
$\bra \cdot, \cdot\ket $ is the Frobenius inner product of matrices, and the functions $\Lambda, S:\mR \to \mH_m^+$ are associated with the transfer functions (\ref{Ftrans})--(\ref{Gabcd}) as
\begin{align}
\label{FF}
    \Lambda(\omega)
    & :=
    F(i\omega)^*F(i\omega),\\
\label{GG}
    S(\omega)
    & :=
    G(i\omega)G(i\omega)^*
\end{align}
and take values in the set $\mH_m^+$ of complex positive semi-definite Hermitian matrices of order $m$, with $(\cdot)^* := (\overline{(\cdot)})^\rT$ the complex conjugate transpose.
\end{theorem}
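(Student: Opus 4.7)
The plan is first to show that $\bE(|W_T(t)|^2)$ and $\bE(|Z_T(t)|^2)$ both converge to strictly positive finite limits as $t\to +\infty$, and then to evaluate the ratio of these limits via spectral density integrals over $\mR$.

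First, I would assemble the joint Ito process $\Xi := (X^\rT,\xi^\rT, V_T^\rT, W_T^\rT, Z_T^\rT)^\rT$ by concatenating the SDEs (\ref{dX}), (\ref{dxi}), (\ref{dVT})--(\ref{dZT}). This yields a single LCTI SDE $\rd \Xi = \cA \Xi\rd t + \cB \rd V$ driven by the standard Wiener process $V$, whose drift matrix $\cA$ is block lower-triangular with Hurwitz diagonal blocks $A$, $\alpha$, $-\Omega I_m$, $-\Omega I_m$, $-\Omega I_m$. Hence $\cA$ is Hurwitz, the transient contributions decay exponentially, and $\Xi$ started from zero converges in distribution to a unique zero-mean Gaussian invariant law whose covariance solves the Lyapunov equation $\cA P + P \cA^\rT + \cB \cB^\rT = 0$. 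In particular the marginal second moments $\bE(|W_T(t)|^2)$ and $\bE(|Z_T(t)|^2)$ converge to finite positive limits $\sigma_W^2$, $\sigma_Z^2$, and a routine Cesaro argument then upgrades the $\limsup$ in (\ref{Fgain}) to an actual limit equal to $\sigma_Z/\sigma_W$, giving the first equality in (\ref{limFgain}).

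Second, I would identify the stationary spectral structure. Since $V_T$ is the OU process with scalar covariance $\re^{-\Omega|t-\tau|}I_m$, its spectral density is $\Sigma(\omega)I_m$ with $\Sigma$ from (\ref{Sigma}), which can also be read off directly from $|\Phi(i\omega)|^2$. The commuting relations (\ref{ZTFWT_WTGVT}) together with stability of the cascades $G$ and $FG$ imply that the stationary spectral densities of $W_T$ and $Z_T$ are $\Sigma(\omega) G(i\omega) G(i\omega)^*$ and $\Sigma(\omega) F(i\omega) G(i\omega) G(i\omega)^* F(i\omega)^*$, respectively. Taking traces, using the cyclic identity
\[
\Tr\bigl(F G G^* F^*\bigr) = \Tr\bigl(F^* F \cdot G G^*\bigr) = \bra \Lambda, S\ket,
\]
and invoking Plancherel to express $\sigma_W^2$ and $\sigma_Z^2$ as the integrals of these traces (up to a common $2\pi$ normalisation), one arrives at (\ref{limFgain}) after the $2\pi$ factors cancel in the ratio.

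The main obstacle, in my view, is justifying the passage to the stationary spectrum cleanly. The underlying processes $W$ and $Z$ need not possess finite second moments when $g_0$, $f_0$ are nonzero, so Plancherel cannot be applied to them directly; the regularising role of the low-pass filter $\Phi$ in (\ref{VThat})--(\ref{ZThat}) is precisely what makes the filtered variances finite, and this must be spelled out by appealing to the Hurwitz property of $\cA$ above rather than to any formal spectral manipulation of $W$ or $Z$. Once this regularisation is in place and the identification of the spectral density $\Sigma(\omega)I_m$ of the OU process $V_T$ is made rigorous from (\ref{dVT}), the remaining manipulations reduce to routine Hermitian spectral-density algebra.
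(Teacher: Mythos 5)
Your proposal is correct and arrives at (\ref{limFgain}) by the same underlying mechanism as the paper—regularisation by the low-pass filter, convergence of the second moments $\bE(|W_T(t)|^2)$ and $\bE(|Z_T(t)|^2)$, a Cesaro upgrade of the $\limsup$ in (\ref{Fgain}), and a Plancherel-type identification of the limits with $\frac{1}{2\pi}\int_\mR\Sigma(\omega)\Tr S(\omega)\rd\omega$ and $\frac{1}{2\pi}\int_\mR\Sigma(\omega)\bra\Lambda(\omega),S(\omega)\ket\rd\omega$—but the convergence step is packaged differently. The paper writes $W_T(t)=\int_0^t h(t-u)\rd V(u)$ with $h=\phi*\psi$ the convolution of the low-pass and shaping-filter impulse responses, applies the Ito isometry to get $\bE(|W_T(t)|^2)=\int_0^t\|h(u)\|_\rF^2\rd u$, and then uses square integrability of $\Phi G$ together with Plancherel and monotone convergence (and similarly for $Z_T$ with $\Phi F G$); you instead stack $(X,\xi,V_T,W_T,Z_T)$ into a single SDE driven by $V$ with a block lower-triangular Hurwitz drift, read the limiting second moments off the Lyapunov equation of the invariant Gaussian law, and then identify the stationary variances spectrally. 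Both routes are legitimate: the paper's kernel computation is more self-contained and delivers the frequency-domain expressions in one stroke, while your augmented state-space argument makes the finiteness of the filtered second moments (despite $f_0,g_0\neq 0$) transparent and extends immediately to other stacked outputs. Two small blemishes, neither fatal: the diagonal block associated with $Z_T$ is $-\Omega I_p$ rather than $-\Omega I_m$, and strict positivity of $\lim_{t\to+\infty}\bE(|Z_T(t)|^2)$ is neither needed nor guaranteed (it can vanish, e.g.\ when $FG\equiv 0$); only positivity of the denominator, which follows from excluding the trivial case $S=0$ almost everywhere, is required for the ratio and Cesaro argument.
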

%%%%%%%%%%%%%%%%%%%%%%%%%%%%%%%%%%%%%%%%%%%%%%%%%%%%%%%%%%%%%%%%%%%%%%%%%%%%%%%%%%%%%%%%
\begin{proof}
In view of the initial condition $W_T(0) = 0$,
the solution of the SDE (\ref{dWT}) takes the form
\begin{align}
\nonumber
    W_T(t)
    & =
    \int_0^t
    \phi(t-\tau)
    \rd W(\tau)\\
\nonumber
    & =
    \int_0^t
    \phi(t-\tau)
    (c\xi(\tau)\rd \tau + d \rd V(\tau))\\
\nonumber
    & =
    \int_0^t
    \phi(t-\tau)
    \Big(
        c\int_0^\tau \re^{(\tau-u) \alpha} \beta \rd V(\rd u)\rd \tau + d \rd V(\tau)
    \Big)\\
\label{WTV}
    & =
    \int_0^t
    h(t-u)
    \rd V(u),
\end{align}
where
\begin{align}
\nonumber
    h(\tau)
    & :=
    (\phi* \psi)(\tau)\\
\nonumber
    & =
    \int_0^\tau
    \phi(\tau-u)
    \psi(u)
    \rd u\\
\nonumber
    & =
    \int_0^\tau
    \phi(\tau-u)
    c
    \re^{u \alpha} \beta
    \rd u
    +
    \phi(\tau)
    d,
    \qquad
    \tau\> 0,
\end{align}
is the convolution of the impulse responses of the low-pass filter  (\ref{dWT}) and the shaping filter (\ref{dxi}):
$
    \phi(\tau):=
    \sqrt{2\Omega}
    \re^{-\Omega \tau}
$, $
    \psi(\tau)
    :=
    c \re^{\tau \alpha}
    \beta + d \delta(\tau)
$,
$
    \tau\> 0
$
(with $\delta$ the Dirac delta function \cite{V_2002}).
Since $V$ is a standard Wiener process in $\mR^m$, application of the Ito isometry to (\ref{WTV}) leads to
\begin{equation}
\label{varWT}
    \bE(|W_T(t)|^2)
    =
    \int_0^t
    \|h(u)\|_\rF^2
    \rd u,
    \qquad
    t\>0.
\end{equation}
Due to the square integrability of the transfer function $\Phi G$  (which is the Laplace transform of $h$) over the imaginary axis and the Plancherel theorem, the function $h$ is square integrable over $\mR_+$, and (\ref{varWT}) leads to the monotonic convergence
\begin{align*}
\nonumber
    \lim_{t\to +\infty}&
    \bE(|W_T(t)|^2)
     =
    \int_{\mR_+}
    \|h(u)\|_\rF^2
    \rd u\\
\nonumber
    & =
    \frac{1}{2\pi}
    \int_{\mR}
    |\Phi(i\omega)|^2
    \|G(i\omega)\|_\rF^2
    \rd \omega\\
    &=
    \frac{1}{2\pi}
    \int_{\mR}
    \Sigma(\omega)
    \Tr S(\omega)
    \rd \omega
\end{align*}
in view of (\ref{Sigma}), (\ref{GG}). The same limit is shared by the Cesaro means
\begin{align}
\nonumber
    \lim_{t\to +\infty}
    \Big(
        \frac{1}{t}&
        \int_0^t
        \bE(|W_T(\tau)|^2)
        \rd \tau
    \Big)
    =
    \lim_{t\to +\infty}
    \bE(|W_T(t)|^2)\\
\label{limvarWT1}
    & =
    \frac{1}{2\pi}
    \int_{\mR}
    \Sigma(\omega)
    \Tr S(\omega)
    \rd \omega.
\end{align}
In application to the filtered output $Z_T$,  a similar reasoning shows that
\begin{align}
\nonumber
    \lim_{t\to +\infty}
    \Big(
        \frac{1}{t}&
        \int_0^t
        \bE(|Z_T(\tau)|^2)
        \rd \tau
    \Big)
    =
    \lim_{t\to +\infty}
    \bE(|Z_T(t)|^2)
\\
\nonumber
    & =
    \frac{1}{2\pi}
    \int_{\mR}
    \Sigma(\omega)
    \|F(i\omega)G(i\omega)\|_\rF^2
    \rd \omega\\
\nonumber
    & =
    \frac{1}{2\pi}
    \int_{\mR}
    \Sigma(\omega)
    \Tr(F(i\omega)S(\omega) F(i\omega)^*)
    \rd \omega\\
\label{limvarZT1}
    & =
    \frac{1}{2\pi}
    \int_{\mR}
    \Sigma(\omega)
    \bra
        \Lambda(\omega),
        S(\omega)
    \ket
    \rd \omega,
\end{align}
where (\ref{FF}) is also used.  A combination of (\ref{limvarWT1}), (\ref{limvarZT1}) implies the existence of a limit in (\ref{Fgain}) which is given by (\ref{limFgain}).
\end{proof}
%%%%%%%%%%%%%%%%%%%%%%%%%%%%%%%%%%%%%%%%%%%%%%%%%%%%%%%%%%%%%%%%%%%%%%%%%%%%%%%%%%%%%%%%

The RMS gain $\sn F\sn$ in  (\ref{limFgain}) is a semi-norm of the system transfer function $F$ in (\ref{Ftrans}). It is well-defined for any bounded (and not necessarily integrable) spectral density $S$ (the trivial case when $S=0$ almost everywhere is excluded from consideration). This is secured by the presence of the  integrable function $\Sigma$ from (\ref{Sigma}) as a factor in the integrands. The dependence of the RMS gain on $S$ will be indicated by the subscript as $\sn F \sn = \sn F\sn_S$. 
Note that $\sn F\sn_S$ is invariant under the scaling $S\mapsto \kappa S$ with an arbitrary constant $\kappa>0$ (which cancels out in the numerator and denominator in (\ref{limFgain})):
\begin{equation}
\label{kappaS}
    \sn F\sn_{\kappa S}
    =
     \sn F\sn_S.
\end{equation}
Also, if the system $F$ is isometric (that is, inner) up to a multiplicative constant, so that the function (\ref{FF}) satisfies $\Lambda(\omega) = \lambda I_m$ for all $\omega \in \mR$ for some constant $\lambda>0$, then the RMS gain reduces to
\begin{equation}
\label{Fiso}
    \sn F \sn_S
    =
    \sqrt{
    \frac{\int_{\mR} \Sigma(\omega) \bra \lambda I_m, S(\omega)\ket \rd \omega}
    {\int_{\mR} \Sigma(\omega) \Tr S(\omega)\rd \omega}}
    =
    \sqrt{\lambda}
\end{equation}
and does not depend on the spectral density $S$. In what follows, such systems $F$ will be referred to as \emph{round} systems (with all the other systems being called \emph{nonround}).
In general,  application of the inequalities $\lambda_{\min}(L)\Tr M \< \bra L,M\ket\< \lambda_{\max}(L)\Tr M$ (for Hermitian matrices $L$, $M$, with $M\succcurlyeq 0$) leads to
\begin{align}
\nonumber
    \inf_{\omega \in \mR}
    \sqrt{\lambda_{\min}(\Lambda(\omega))}
    & \<
    \sn F \sn_S\\
\label{RMSbounds}
    &\<
    \sup_{\omega \in \mR}
    \sqrt{\lambda_{\max}(\Lambda(\omega))}
    =
    \|F\|_{\infty},
\end{align}
where $\lambda_{\min}(\cdot)$, $\lambda_{\max}(\cdot)$ are the smallest and largest eigenvalues of a Hermitian matrix,
and the right-hand side is the $\cH_\infty$-norm \cite{F_1987} of the transfer function, so that (\ref{Fiso}) is a particular case of (\ref{RMSbounds}). If the input  and output dimensions of the system satisfy $m>p$,  then the matrix $\Lambda(\omega)$ in (\ref{FF}) is singular (at every frequency $\omega \in \mR$) and the left-hand side of (\ref{RMSbounds}) vanishes. For arbitrary dimensions, the RMS gain $\sn F \sn_S$ can approach any intermediate value in (\ref{RMSbounds}) by an appropriate choice of the spectral density $S$.

%%%%%%%%%%%%%%%%%%%%%%%%%%%%%%%%%%%%%%%%%%%%%%%%%%%%%%%%%%%%%%%%%%%%%%%%%%%%%%%%%%%%%%%%%%%%%%%%
\begin{theorem}
\label{th:inter}
Suppose the system (\ref{dZ}), (\ref{dX}) is stable. Then for any $\mu$ satisfying
$    \inf_{\omega \in \mR}
    \sqrt{\lambda_{\min}(\Lambda(\omega))} \< \mu \< \|F\|_{\infty}$ in (\ref{RMSbounds}) and any $\eps>0$,  there exists a rational spectral density $S$ (associated with a stable shaping filter (\ref{dxi})) such that $|\sn F \sn_S-\mu|< \eps$.
\end{theorem}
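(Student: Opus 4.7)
My plan is to construct, for any target value $\mu$ in the interval of the theorem, a family of rational spectral densities $S_b$ whose mass concentrates, as a small damping parameter $b\to 0^+$, at a prescribed pair of frequencies $\pm\omega_0\in\mR$ and along a prescribed rank-one Hermitian direction $w_0 w_0^*$ with $w_0\in\mC^m\setminus\{0\}$. Since the RMS gain in (\ref{limFgain}) is a ratio of frequency integrals weighted by $S$, such concentration will drive $\sn F\sn_{S_b}^2$ towards the Rayleigh quotient $w_0^*\Lambda(\omega_0)w_0/|w_0|^2$, and the latter --- as $(\omega_0,w_0)$ vary --- sweeps out the full interval (\ref{RMSbounds}).

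Step 1 (shaping filter). For $\omega_0\neq 0$, write $w_0=p+iq$ with $p,q\in\mR^m$, fix any $v\in\mR^m\setminus\{0\}$ and put $p_b(s):=s^2+2bs+b^2+\omega_0^2$. Define the real-rational, strictly proper, stable shaping filter
\[
G_b(s) := \frac{pv^\rT + \omega_0^{-1} q v^\rT s}{p_b(s)},
\]
whose poles $-b\pm i\omega_0$ lie in the open left half-plane and which therefore admits a state-space realisation of the form (\ref{dxi}). Direct substitution yields $G_b(i\omega_0) = w_0 v^\rT/p_b(i\omega_0)$, so the induced density $S_b(\omega):=G_b(i\omega)G_b(i\omega)^*$ satisfies $S_b(\omega_0) = |v|^2 w_0w_0^*/|p_b(i\omega_0)|^2$ (and the complex conjugate relation at $-\omega_0$), realising the target direction up to the scalar factor. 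The case $\omega_0=0$ is handled analogously by $G_b(s):=w_0 v^\rT/(s+b)$ with real $w_0$, which is sufficient since $\Lambda(0)$ is a real matrix.

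Step 2 (concentration and limit). The identity $|p_b(i\omega)|^{-2} = \big((\omega_0^2-\omega^2)^2 + 4b^2\omega^2\big)^{-1}$ and the substitution $\omega=\pm\omega_0+b\tau$ imply that $\int_\mR|p_b(i\omega)|^{-2}\rd\omega \sim \pi/(2b\omega_0^2)$ as $b\to 0^+$, with the bulk of the mass in shrinking neighbourhoods of $\pm\omega_0$. Hence the normalised probability measure $\nu_b(\rd\omega):=|p_b(i\omega)|^{-2}\rd\omega/\int_\mR|p_b(i\xi)|^{-2}\rd\xi$ converges weakly to $(\delta_{\omega_0}+\delta_{-\omega_0})/2$. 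Writing $N(\omega):=(A+i\omega B)(A+i\omega B)^*$ with $A=pv^\rT$, $B=\omega_0^{-1} qv^\rT$, both integrands $\Sigma(\omega)\bra\Lambda(\omega),N(\omega)\ket$ and $\Sigma(\omega)\Tr N(\omega)$ in (\ref{limFgain}) are bounded continuous functions of $\omega$ (since $\Lambda$ is bounded by $\|F\|_\infty^2$, $\Sigma$ decays like $\omega^{-2}$ and $N$ grows only quadratically). Applying the weak limit to numerator and denominator separately, and invoking $\Sigma(-\omega)=\Sigma(\omega)$, $\Lambda(-\omega)=\overline{\Lambda(\omega)}$, and the reality of $w_0^*\Lambda(\omega_0)w_0$, one obtains
\[
\lim_{b\to 0^+}\sn F\sn_{S_b}^2 \;=\; \frac{w_0^*\Lambda(\omega_0)w_0}{|w_0|^2}.
\]

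Step 3 (covering the interval, conclusion). Continuity of $\Lambda$ on $\mR$ (which follows from stability of $F$, so that $F$ is analytic across $i\mR$) and of the complex Rayleigh quotient imply that the set $\{w_0^*\Lambda(\omega_0)w_0/|w_0|^2:\omega_0\in\mR,\,w_0\in\mC^m\setminus\{0\}\}$ coincides with the connected union $\bigcup_{\omega_0\in\mR}[\lambda_{\min}(\Lambda(\omega_0)),\lambda_{\max}(\Lambda(\omega_0))]$, whose infimum is $\inf_\omega\lambda_{\min}(\Lambda(\omega))$ and whose supremum is $\sup_\omega\lambda_{\max}(\Lambda(\omega))=\|F\|_\infty^2$. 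Given $\mu$ in the interval of the theorem and $\eps>0$, I first pick $(\omega_0,w_0)$ such that $\big|\sqrt{w_0^*\Lambda(\omega_0)w_0/|w_0|^2}-\mu\big|<\eps/2$, and then choose $b>0$ small enough that Step 2 gives $\big|\sn F\sn_{S_b}-\sqrt{w_0^*\Lambda(\omega_0)w_0/|w_0|^2}\big|<\eps/2$; the triangle inequality closes the argument with $S:=S_b$. The main technical obstacle is the uniform tail estimate underlying the weak convergence in Step 2, since (\ref{limFgain}) integrates over all of $\mR$: the contributions $\int_{|\omega\mp\omega_0|>\delta}(\cdot)|p_b(i\omega)|^{-2}\rd\omega$ must be shown to be $o(1/b)$. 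This follows by splitting each integral into a fixed neighbourhood of $\pm\omega_0$, on which the change of variables $\omega=\pm\omega_0+b\tau$ reduces the kernel to a Lorentzian against a continuous factor, and its complement, on which $|p_b(i\omega)|^{-2}\<C/(1+\omega^4)$ uniformly in small $b$ and dominated convergence applies in view of the integrability of $\Sigma$ and boundedness of $\Lambda$.
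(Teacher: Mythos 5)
Your proposal is correct and follows essentially the same route as the paper: concentrate the input spectral density at $\pm\omega_0$ along a rank-one direction (the paper uses Cauchy kernels $S_\gamma$ converging weakly to point masses at $\pm\omega_0$ with directions $uu^*$, $\overline{u}u^{\mathrm T}$, while you use a lightly damped second-order filter $G_b$ whose Lorentzian-type gain achieves the same concentration), pass to the Rayleigh-quotient limit $w_0^*\Lambda(\omega_0)w_0/|w_0|^2$, and then sweep the interval in (\ref{RMSbounds}) using continuity of $\Lambda$ and a triangle-inequality argument, exactly as in the paper. Your filter-first construction has the minor merit of exhibiting the stable shaping filter (\ref{dxi}) explicitly (the paper asserts the rational factorizability of $S_\gamma$ without displaying the filter), and apart from the harmless omission of the $b^2$ term in $|p_b(i\omega)|^2=(\omega_0^2+b^2-\omega^2)^2+4b^2\omega^2$, the details check out.
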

%%%%%%%%%%%%%%%%%%%%%%%%%%%%%%%%%%%%%%%%%%%%%%%%%%%%%%%%%%%%%%%%%%%%%%%%%%%%%%%%%%%%%%%%%%%%%%%%
\begin{proof}
With any $\gamma >0$, a frequency $\omega_0 \in \mR$ and a unit vector $u \in \mC^m$, we associate a rational spectral  density $S_{\gamma}: \mR\to \mH_m^+$:
\begin{equation}
\label{Seps}
    S_{\gamma}(\omega)
    :=
    \frac{\gamma}{\pi}
    \Big(
    \frac{1}{\gamma^2 + (\omega-\omega_0)^2}uu^*
    +
    \frac{1}{\gamma^2 + (\omega+\omega_0)^2}\overline{u}u^\rT
    \Big),
\end{equation}
which satisfies $\int_{\mR}\Tr S(\omega) \rd \omega = 2$. Due to the weak convergence \cite{B_1968} of the Cauchy distribution with the probability density function $    \frac{1}{\pi}
    \frac{\gamma}{\gamma^2 + (\omega-\omega_0)^2}$ to the atomic probability measure,   concentrated at $\omega_0$,  as $\gamma\to 0+$ (and similarly for the opposite frequency $-\omega_0$), the spectral density (\ref{Seps}) is  convergent in the distributional sense \cite{V_2002} as
\begin{equation}
\label{limSeps}
    \lim_{\gamma\to 0+}
    S_{\gamma}(\omega)
    =
    \delta(\omega-\omega_0) uu^*  + \delta(\omega+\omega_0)\overline{u}u^\rT.
\end{equation}
 Since the functions $\Sigma$,  $\Lambda$ in (\ref{Sigma}), (\ref{FF}) are bounded and continuous, (\ref{limSeps}) implies that
\begin{align}
\label{lim1}
    \lim_{\gamma\to 0+}
    \int_\mR
    \Sigma(\omega)
    \bra
        \Lambda(\omega),
        S_{\gamma}(\omega)
    \ket
    \rd \omega
    & =
    2
    \Sigma(\omega_0)
    \|u\|_{\Lambda(\omega_0)}^2,\\
\label{lim2}
    \lim_{\gamma\to 0+}
    \int_\mR
    \Sigma(\omega)
    \Tr
        S_{\gamma}(\omega)
    \rd \omega
    & =
    2
    \Sigma(\omega_0),
\end{align}
where $\|v\|_M:= |\sqrt{M} v| = \sqrt{v^* M v}$ is a weighted Euclidean semi-norm of a complex vector $v$ specified by a positive semi-definite Hermitian matrix $M$. Here, use is also made of the assumption $|u|=1$ and the property
\begin{equation}
\label{Lamom}
    \Lambda(-\omega)
    =
    \overline{\Lambda(\omega)}
    =
    \Lambda(\omega)^\rT,
\end{equation}
whereby $\Lambda(-\omega)$ is isospectral to $\Lambda(\omega)$, and   $\|\overline{u}\|_{\Lambda(-\omega)} = \|u\|_{\Lambda(\omega)}$.  In view of (\ref{lim1}), (\ref{lim2}), the corresponding RMS gain in (\ref{limFgain}) satisfies
\begin{equation}
\label{Del1}
    \lim_{\gamma\to 0+}
    \sn F\sn_{S_{\gamma}}
    =
    \|u\|_{\Lambda(\omega_0)}
    \in
    \Delta(\omega_0),
\end{equation}
where $\Delta$ is a set-valued map which maps a frequency $\omega \in \mR$ to the interval
\begin{equation}
\label{Del2}
    \Delta(\omega)
    :=
    \big[
        \sqrt{\lambda_{\min}(\Lambda(\omega))}, \,
        \sqrt{\lambda_{\max}(\Lambda(\omega))}
    \big].
\end{equation}
Any given point of $\Delta(\omega_0)$ in (\ref{Del1}) is achievable by an appropriate choice of a unit vector $u\in \mC^m$ since $\{\|u\|_{\Lambda(\omega)}: u \in \mC^m, |u|=1 \}=\Delta(\omega)$ for any $\omega \in \mR$.
With the endpoints of this interval being continuous even functions of $\omega$ (the continuity is inherited from $\Lambda$), the map $\Delta$ in (\ref{Del2}) covers the interior of the interval in (\ref{RMSbounds}) in the sense that
$
    \big(
    \inf_{\omega \in \mR}
    \sqrt{\lambda_{\min}(\Lambda(\omega))}, \,
    \|F\|_{\infty}
    \big)
    \subset
    \bigcup_{\omega\in \mR}
    \Delta(\omega)
$.
Therefore, for any $\mu$ from the interval in (\ref{RMSbounds}) and any $\eps>0$, there exists $\omega_0 \in \mR$ and a unit vector $u \in \mC^m$ such that $|\|u\|_{\Lambda(\omega_0)} - \mu|< \eps$. In view of (\ref{Del1}), the corresponding spectral density (\ref{Seps}) delivers the RMS gain  which satisfies $|\sn F\sn_{S_\gamma} -  \|u\|_{\Lambda(\omega_0)}| < \eps$ for all sufficiently small $\gamma >0$, and hence, $|\sn F\sn_{S_\gamma} -  \mu| < 2\eps$
by the triangle inequality.
\end{proof}
%%%%%%%%%%%%%%%%%%%%%%%%%%%%%%%%%%%%%%%%%%%%%%%%%%%%%%%%%%%%%%%%%%%%%%%%%%%%%%%%%%%%%%%%%%%%%%%%

It follows from Theorem~\ref{th:inter} that (in the absence of specific additional constraints on the spectral density $S$),  the second inequality in (\ref{RMSbounds}) cannot be improved since 
    $\sup_S
    \sn F\sn_S
    =
    \|F\|_{\infty}$, 
and moreover, the supremum can be restricted (without affecting its value) to the class of rational spectral densities $S: \mR \to \mH_m^+$. The proof of the theorem provides a particular way to construct a maximizing sequence of spectral densities $S_\gamma$ in  (\ref{Seps}), with $u$ being a  unit eigenvector of the matrix $\Lambda(\omega_0)$ associated with its largest eigenvalue. Such  spectral densities exhibit ``energy concentration'' about certain frequencies and in certain directions in $\mC^m$.

\section{RMS gain with respect to isotropic white-noise inputs}
\label{sec:IWN}

As opposed to the energy concentration used in the proof of Theorem~\ref{th:inter},
consider an isotropic white-noise case when the spectral density of the input $W$ is a constant scalar matrix:
\begin{equation}
\label{S0}
    S(\omega) = \kappa I_m,
    \qquad
    \omega \in \mR,
\end{equation}
where $\kappa>0$ is a scalar parameter. In this case, $W$ is a standard Wiener process up to the multiplicative constant $\sqrt{\kappa}$ (and can be obtained from $V$ by letting $g=0$, $g_0= \sqrt{\kappa} I_m$ in (\ref{WV})), so that $W_T$ is an OU process.  Then the RMS gain takes the form
\begin{align}
\nonumber
    \sn F\sn_{\kappa I_m}
    & =
    \sn F\sn_{I_m}
    = \sqrt{
    \frac{\int_{\mR} \Sigma(\omega) \Tr \Lambda(\omega)\rd \omega}
    {m \int_{\mR} \Sigma(\omega) \rd \omega}}\\
\nonumber
    & =
    \sqrt{
    \frac{1}{2\pi m}
    \int_{\mR} \Sigma(\omega) \Tr \Lambda(\omega)\rd \omega}\\
\label{Fgain0}
    & =
    \sqrt{
    \frac{1}{2\pi m}
    \int_{\mT}
    \Tr \Lambda
    \Big(\Omega \tan \frac{\varphi}{2}\Big)
    \rd\varphi},
\end{align}
since the function $\Sigma$ in (\ref{Sigma}) satisfies $\int_{\mR} \Sigma(\omega) \rd \omega = 2\pi$. Here, we have used the following transformation of the frequency:
\begin{equation}
\label{omphi}
    \omega
    =
    \Omega
    \tan
    \frac{\varphi}{2},
    \qquad
    \varphi
    :=
    2 \arctan (\omega T),
\end{equation}
so that
\begin{equation}
\label{dphidom}
    \d_\omega \varphi = \Sigma(\omega)
\end{equation}
(or, equivalently, $\Sigma(\omega)\rd \omega = \rd \varphi$), where  the new integration variable
$\varphi$ takes values in
the interval
\begin{equation}
\label{mT}
    \mT:= (-\pi,\pi),
\end{equation}
which represents the 
unit circle $\{\re^{i\varphi}: \varphi \in \mT\} = \{z\in \mC: |z|=1\}\setminus \{-1\}$ in the complex plane, punctured at $-1$.
The right-hand side of (\ref{Fgain0}) does not depend on $\kappa$ in accordance with the scale invariance (\ref{kappaS}) and is organised as a weighted $\cH_2$-norm \cite{F_1987} of the transfer function $F$, which involves the parameter $\Omega$ from (\ref{Omega}). We will now discuss the asymptotic behaviour of the RMS gain (\ref{Fgain0}) as a function of $T$.

As $T\to +\infty$ (so that $\Omega \to 0+$), the OU process $W_T$ acquires long-range correlations.
 In this case, the spectral density $\Sigma(\omega)$ in (\ref{Sigma}) converges to $2\pi \delta(\omega)$ in the distributional sense.
The effect of such a process on the system is, in essence, equivalent to that of a constant input.
The corresponding limit
\begin{align}
\nonumber
    \lim_{T\to +\infty}
    \sn F \sn_{I_m}
    & =
    \lim_{\Omega\to 0+}
    \sn F \sn_{I_m}\\
\label{Tinf}
    & =
    \sqrt{
    \frac{1}{m}
    \Tr \Lambda(0)
    }
    =
    \frac{1}{\sqrt{m}}
    \|F(0)\|_{\rF},
\end{align}
which is obtained by applying Lebesgue's dominated convergence theorem  to the RMS gain (\ref{Fgain0}), involves the static gain matrix
$
    F(0) = D-CA^{-1}B
$
of the system in view of (\ref{Ftrans}).

By a similar reasoning,
as the correlation time $T$ of the OU input $W_T$ goes to zero (and hence, $\Omega \to +\infty$), the spectral density  (\ref{Sigma}) becomes constant ($\Sigma(\omega) \approx 2T$) over the widening  frequency interval $|\omega| \ll \Omega$ and the RMS gain (\ref{Fgain0}) approaches
\begin{align}
\nonumber
    \lim_{T\to 0+}
    \sn F \sn_{I_m}
    & =
    \lim_{\Omega\to +\infty}
    \sn F \sn_{I_m}\\
\label{T0}
    & =
    \sqrt{
    \frac{1}{m}
    \Tr \lim_{\omega\to \infty}\Lambda(\omega)
    }
    =
    \frac{1}{\sqrt{m}}
    \|D\|_{\rF}.
\end{align}
The limits in (\ref{Tinf}), (\ref{T0}) do not reduce to the standard $\cH_2$-norm
\begin{equation}
\label{H2}
    \|F\|_2 :=
    \sqrt{\frac{1}{2\pi} \int_{\mR} \Tr \Lambda(\omega)\rd \omega}
\end{equation}
which is finite only when the system is strictly proper, that is, if $D=0$.
In the latter case, the limit in (\ref{T0}) vanishes, and   the asymptotic behaviour of the RMS gain is described by 
$$
    \sn F \sn_{I_m}
     =
    \sqrt{
    \frac{T}{\pi m}
    \int_{\mR}
    \frac{\Tr \Lambda(\omega) }{1 +(\omega T)^2}
    \rd \omega}
    \sim
    \|F\|_2
    \sqrt{\frac{2T}{m}},
$$
as     $T \to 0+$,
where use is made of the last equality in (\ref{Sigma}) in combination with
$    \lim_{T\to 0}\int_{\mR}
    \frac{\Tr \Lambda(\omega) }{1 +(\omega T)^2}
    \rd \omega = 2\pi \|F\|_2^2$ which follows from (\ref{H2}) by Lebesgue's  dominated convergence theorem.

The limits (\ref{Tinf}), (\ref{T0}) in the isotropic white-noise case being considered  manifest themselves when the time scale parameter $T$ of the low-pass filtering is large, or, respectively, small, in comparison with the transient times in the system $F$, which can be described in terms of the  set
$
    \cT
    :=
    \big\{
        \tfrac{1}{|\lambda|}:\
        \lambda \in \fS
    \big\}
$,
where $\fS$ denotes the spectrum of the Hurwitz matrix $A$.
The fulfillment of either of the relations
\begin{equation}
\label{Tggll}
    T\gg \max \cT = \rho(A^{-1}),
    \qquad
    T\ll \min \cT = \frac{1}{\rho(A)}
\end{equation}
(with $\rho(\cdot)$ denoting the spectral radius of a square matrix)
indicates whether the OU process $W_T$ is strongly coloured or nearly white for the system.
The relative simplicity of this comparison comes from the fact that such a process has only one time scale parameter $T$ (which specifies the characteristic width $\Omega$ of the effective frequency range for the OU process), whereas more complicated random inputs can have multiple time scales, similarly to the system $F$ itself.

The behaviour of the RMS gain with respect to isotropic white-noise inputs, considered above, is qualitatively different from its discrete-time counterpart \cite{V_1995a}  despite similarities in their definitions.
Indeed, in the discrete-time settings, the frequency range is finite and can be identified with the interval $\mT$ in (\ref{mT}),
whereas the continuous time processes may contain arbitrarily fast components (with arbitrarily short time scales or, equivalently, high frequencies) and have the infinite frequency range. At the same time, the last representation in (\ref{Fgain0}) is in terms of the $\cH_2$-norm of  a discrete-time transfer function related to 
(\ref{Ftrans}) by a conformal correspondence. This correspondence  applies not only to the isotropic white-noise case but also to a wide class of stationary Gaussian disturbances.

\section{Conformal correspondence between continuous and discrete time settings}
\label{sec:conf}

Consider an involutive conformal map (a modified Cayley transform \cite{S_1992})
\begin{equation}
\label{K}
    K(z) = \frac{1-z}{1+z}
\end{equation}
between the open unit disk $\{z \in \mC:\ |z|<1\}$ and the open right half-plane  $\{s \in \mC:\ \Re s >0\}$.
 These two domains in $\mC$ pertain to
the discrete and continuous time settings, respectively. The map $K$ is a smooth bijection of the punctured unit circle $\{z \in \mC:\ |z|=1\}\setminus \{-1\}$  onto the imaginary axis:
\begin{equation}
\label{MT}
    K(\re^{i\varphi})
    =
    \frac{\re^{-i\varphi/2}-\re^{i\varphi/2}}{\re^{-i\varphi/2}+\re^{i\varphi/2}}
    =
    -i
    \tan \frac{\varphi}{2},
    \qquad
    \varphi \in \mT.
\end{equation}
This property is inherited by the scaled version $\Omega K$ of the map for any $\Omega>0$.
The conformal map $\Omega K$ specifies a linear operator $\Psi$ which maps the transfer functions $F$, $G$ of the system and the shaping filter  in (\ref{Ftrans})--(\ref{Gabcd}) to the transfer functions
\begin{align}
\label{FT}
    F_T(z) & := F(\Omega K(z)),\\
\label{GT}
    G_T(z) & := G(\Omega K(z)),
\end{align}
which are analytic in the open unit disk $|z|<1$ and correspond to stable LDTI systems. Up to a factor of two in $\Omega K$, the operator $\Psi$ is identical to Tustin's method \cite{B_2009} of converting LCTI systems to LDTI systems and vice versa. 
Similarly to (\ref{FF}), (\ref{GG}), we associate with $F_T$, $G_T$ the functions $\Lambda_T, S_T: \mT \to \mH_m^+$  by
\begin{align}
\label{FFT}
    \Lambda_T(\varphi)
    & :=
    F_T(\re^{i\varphi})^*F_T(\re^{i\varphi})
    =
    \Lambda
    \big(-\Omega \tan \frac{\varphi}{2}\big),\\
\label{GGT}
    S_T(\varphi)
    & :=
    G_T(\re^{i\varphi})G_T(\re^{i\varphi})^*
    =
    S\big(-\Omega \tan \frac{\varphi}{2}\big),
\end{align}
where the rightmost equalities follow from (\ref{MT}). Their significance is clarified below.

%%%%%%%%%%%%%%%%%%%%%%%%%%%%%%%%%%%%%%%%%%%%%%%%%%%%%%%%%%%%%%%%%%%%%%%%%%%%%%%%%%%%%%%%
\begin{theorem}
\label{th:RMST}
Suppose the system (\ref{dZ}), (\ref{dX}) and the shaping filter (\ref{dxi}) are stable. Then the RMS gain (\ref{limFgain}) is representable in terms of the functions (\ref{FFT}), (\ref{GGT}) as
\begin{align}
\label{RMST}
    \sn F\sn_S
    & =
    \sqrt{
    \frac{\int_{\mT} \bra \Lambda_T(\varphi),  S_T(\varphi)\ket \rd \varphi}
    {\int_{\mT} \Tr S_T(\varphi)\rd \varphi}}.
\end{align}
\end{theorem}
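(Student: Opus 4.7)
My plan is to derive (\ref{RMST}) directly from the spectral representation of the RMS gain given in Theorem~\ref{th:RMS}, using the Cayley-type change of variable that underlies the definitions (\ref{FT})--(\ref{GGT}). The key observation is that the integrating factor $\Sigma(\omega)$ appearing in (\ref{limFgain}) is precisely the Jacobian of the conformal parametrization (\ref{omphi}): by (\ref{dphidom}) we have $\Sigma(\omega)\,\rd\omega = \rd\varphi$, so the change of variable $\omega = \Omega\tan(\varphi/2)$ (equivalently $\varphi = 2\arctan(\omega T)$) converts every integral $\int_\mR \Sigma(\omega)\, h(\omega)\,\rd\omega$ into $\int_\mT h(\Omega\tan(\varphi/2))\,\rd\varphi$ for bounded continuous $h$. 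This is what will transform both the numerator and the denominator of (\ref{limFgain}) into integrals over the punctured unit circle $\mT$.

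First I would verify once and for all the Jacobian identity: differentiating the second equality in (\ref{omphi}) gives $\d_\omega\varphi = 2T/(1+(\omega T)^2) = \Sigma(\omega)$ by the last expression in (\ref{Sigma}), so the relation (\ref{dphidom}) follows. Applying this substitution to the denominator of (\ref{limFgain}) yields
\begin{equation*}
    \int_\mR \Sigma(\omega)\Tr S(\omega)\,\rd\omega
    =
    \int_\mT \Tr S(\Omega\tan(\varphi/2))\,\rd\varphi,
\end{equation*}
and applying it to the numerator gives the analogous expression with $\bra\Lambda(\omega),S(\omega)\ket$ in place of $\Tr S(\omega)$.

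The only point that needs attention is the sign discrepancy: the definitions (\ref{FFT}), (\ref{GGT}) evaluate $\Lambda$ and $S$ at $-\Omega\tan(\varphi/2)$, whereas the substitution above produces them at $+\Omega\tan(\varphi/2)$. I would handle this by invoking the Hermitian symmetry (\ref{Lamom}), which is shared by $S$ (since $G$ has real coefficients, $G(-i\omega) = \overline{G(i\omega)}$, hence $S(-\omega) = \overline{S(\omega)} = S(\omega)^\rT$). Because $\Lambda(\omega)$ and $S(\omega)$ are positive semi-definite Hermitian, the trace and the Frobenius inner product are real, and transposing both factors leaves them invariant; hence $\Tr S(\omega)$ and $\bra\Lambda(\omega),S(\omega)\ket$ are even functions of $\omega$. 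One can then equivalently perform the substitution via $\omega = -\Omega\tan(\varphi/2)$ (the orientation reversal is absorbed in the symmetric integration over $\mT$), after which the integrands at $\varphi$ are exactly $\Tr S_T(\varphi)$ and $\bra\Lambda_T(\varphi),S_T(\varphi)\ket$ by (\ref{FFT}), (\ref{GGT}).

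Substituting the resulting representations of the numerator and denominator into (\ref{limFgain}) delivers (\ref{RMST}). There is no real obstacle in the argument beyond the bookkeeping of the sign convention in the Cayley map (\ref{K}), (\ref{MT}); this is why I would record the evenness of the integrands explicitly before applying the change of variable, so that the two admissible orientations of $\Omega K$ give the same answer and the formulas (\ref{FFT}), (\ref{GGT}) match the transformed integrand identically.
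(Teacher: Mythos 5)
Your proposal is correct and follows essentially the same route as the paper's own proof: both rest on the Jacobian identity $\Sigma(\omega)\,\rd\omega=\rd\varphi$ from (\ref{omphi})--(\ref{dphidom}) and on the evenness of $\Tr S(\omega)$ and $\bra\Lambda(\omega),S(\omega)\ket$ (via the Hermitian symmetry (\ref{Lamom}) and its analogue for $S$), which absorbs the minus sign coming from $K(\re^{i\varphi})=-i\tan(\varphi/2)$ so that the transformed integrands coincide with $\Lambda_T$, $S_T$ in (\ref{FFT}), (\ref{GGT}). Your treatment of the sign/orientation bookkeeping is just a more explicit rendering of the same step.
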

%%%%%%%%%%%%%%%%%%%%%%%%%%%%%%%%%%%%%%%%%%%%%%%%%%%%%%%%%%%%%%%%%%%%%%%%%%%%%%%%%%%%%%%%
\begin{proof}
Similarly to (\ref{Lamom}), the spectral density $S$ in  (\ref{GG}) satisfies
 $   S(-\omega)
    =
    \overline{S(\omega)}
    =
    S(\omega)^\rT$,
and hence, both $\bra \Lambda(\omega), S(\omega)\ket = \bra \Lambda(-\omega), S(-\omega)\ket $ and $\Tr S(\omega) = \Tr S(-\omega)$ are even functions of $\omega\in \mR$. In combination with the frequency transformation in (\ref{omphi}), (\ref{dphidom}), this allows the numerator and denominator in (\ref{limFgain}) to be represented as
\begin{align}
\nonumber
    \int_{\mR}
    \Sigma(\omega)
    &\bra
        \Lambda(\omega),
        S(\omega)
    \ket \rd \omega
    =
    \int_{\mR}
    \Sigma(\omega)
    \bra
        \Lambda(-\omega),
        S(-\omega)
    \ket
    \rd \omega\\
\nonumber
    &=
    \int_{\mT}
    \bra
        \Lambda\big(-\Omega \tan \tfrac{\varphi}{2}\big),
        S\big(-\Omega \tan \tfrac{\varphi}{2}\big)
    \ket
    \rd \varphi\\
\label{num}
    &=
    \int_{\mT}
    \bra
        \Lambda_T(\varphi),
        S_T(\varphi)
    \ket
    \rd \varphi,\\
\nonumber
    \int_{\mR} \Sigma(\omega) &\Tr S(\omega)\rd \omega
    =
    \int_{\mR} \Sigma(\omega) \Tr S(-\omega)\rd \omega\\
\nonumber
    & = \int_{\mT}
    \Tr
    S\big(-\Omega \tan \tfrac{\varphi}{2}\big)
    \rd \varphi\\
\label{den}
    & = \int_{\mT} \Tr S_T(\varphi)\rd \varphi,
\end{align}
where  (\ref{FFT}), (\ref{GGT}) are also used. Substitution of (\ref{num}), (\ref{den}) into (\ref{limFgain}) establishes (\ref{RMST}).
\end{proof}
%%%%%%%%%%%%%%%%%%%%%%%%%%%%%%%%%%%%%%%%%%%%%%%%%%%%%%%%%%%%%%%%%%%%%%%%%%%%%%%%%%%%%%%%%%

The above proof 
employs the connection between the spectral density $\Sigma$ of the OU process in (\ref{Sigma}) and the logarithmic derivative $    (\ln K(s))' = \frac{2}{s^2-1}
$ of the conformal map $K$ in (\ref{K}).\footnote{recall that, being an involution, $K$ coincides with its functional inverse $K^{-1}$}

The function $S_T$ in (\ref{GGT}) is the spectral density of an auxiliary stationary zero-mean Gaussian random sequence $\varpi:= (\varpi_k)_{k \in \mZ}$ in $\mR^m$ whose elements are indexed by the set of integers $\mZ$. Their variance (which is constant over time due to stationarity) is
\begin{equation}
\label{varw}
    \bE (|\varpi_k|^2)
    =
    \frac{1}{2\pi}
    \int_{\mT}
    \Tr S_T(\varphi)
    \rd \varphi,
    \qquad
    k \in \mZ.
\end{equation}
The sequence $\varpi$ is the output of an LDTI shaping filter (with the transfer function $G_T$ in (\ref{GT})) driven by a Gaussian white noise sequence $\ups:= (\ups_k)_{k \in \mZ}$ in $\mR^m$ with zero mean and the identity covariance matrix:
\begin{equation}
\label{EvEvv}
    \bE \ups_k = 0,
    \qquad
    \bE (\ups_j\ups_k^\rT) = \delta_{jk} I_m,
    \qquad
    j,k\in \mZ.
\end{equation}
At the same time, $\varpi$ is the input  to an LDTI system (with the transfer function $F_T$ in (\ref{FT})) whose output is a stationary zero-mean Gaussian random sequence $\zeta:= (\zeta_k)_{k \in \mZ}$ in $\mR^p$. The resulting setup is shown in Fig.~\ref{fig:three}.
%==============================================================================
\begin{figure}[htbp]
\unitlength=0.6mm
\linethickness{0.4pt}
\centering
\begin{picture}(110,75.00)
    \put(35,30){\framebox(10,10)[cc]{{$F$}}}
    \put(50,45){\framebox(10,10)[cc]{{$\Phi$}}}
    \put(80,45){\framebox(10,10)[cc]{{$\Phi$}}}

    \put(39.7,12){\framebox(0.6,17.75)[cc]{}}
    \put(40,10){\makebox(0,0)[cb]{$\blacktriangledown$}}
    \put(69.7,12){\framebox(0.6,17.75)[cc]{}}
    \put(70,10){\makebox(0,0)[cb]{$\blacktriangledown$}}

    \put(35,20){\makebox(0,0)[cc]{$\Psi$}}
    \put(75,20){\makebox(0,0)[cc]{$\Psi$}}

    \put(25,65){\vector(0,-1){10}}
    \put(55,65){\vector(0,-1){10}}
    \put(85,65){\vector(0,-1){10}}
    \put(55,45){\line(0,-1){10}}
    \put(85,45){\line(0,-1){10}}
    \put(25,45){\line(0,-1){10}}
    \put(65,35){\vector(-1,0){20}}
    \put(65,5){\vector(-1,0){20}}
    \put(95,35){\vector(-1,0){20}}
    \put(95,5){\vector(-1,0){20}}
    \put(35,35){\vector(-1,0){20}}
    \put(35,5){\vector(-1,0){20}}
    \put(20,45){\framebox(10,10)[cc]{{$\Phi$}}}
    \put(35,60){\framebox(10,10)[cc]{{$F$}}}
    \put(35,0){\framebox(10,10)[cc]{{$F_T$}}}
    \put(65,60){\framebox(10,10)[cc]{{$G$}}}
    \put(65,30){\framebox(10,10)[cc]{{$G$}}}
    \put(65,0){\framebox(10,10)[cc]{{$G_T$}}}
    \put(35,65){\vector(-1,0){20}}
    \put(65,65){\vector(-1,0){20}}
    \put(95,65){\vector(-1,0){20}}
    \put(10,65){\makebox(0,0)[cc]{$Z$}}
    \put(10,35){\makebox(0,0)[cc]{$Z_T$}}
    \put(10,5){\makebox(0,0)[cc]{$\zeta$}}
    \put(100,65){\makebox(0,0)[cc]{$V$}}
    \put(100,35){\makebox(0,0)[cc]{$V_T$}}
    \put(100,5){\makebox(0,0)[cc]{$\ups$}}

    \put(55,70){\makebox(0,0)[cc]{$W$}}
    \put(55,30){\makebox(0,0)[cc]{$W_T$}}
    \put(55,0){\makebox(0,0)[cc]{$\varpi$}}
\end{picture}
\caption{The block diagram of Fig.~\ref{fig:FGT} augmented by the linear operator $\Psi$ (represented by double arrows) which maps the continuous-time transfer functions $F$, $G$ to their discrete-time counterparts $F_T$, $G_T$ in (\ref{FT}), (\ref{GT}). Also shown are the auxiliary random sequences $\zeta$, $\varpi$, $\ups$ related by the LDTI systems $F_T$, $G_T$. The RMS gain of $F$ with respect to $W_T$ in (\ref{limFgain}) is equal to that of $F_T$ with respect to $\varpi$ in (\ref{RMSconf}).}
\label{fig:three}
\end{figure}
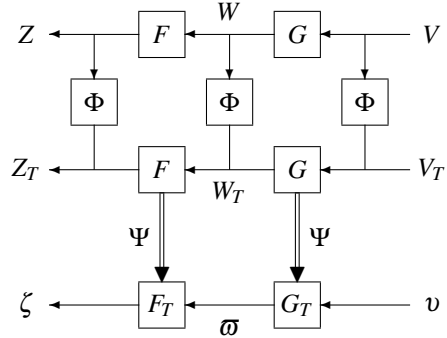
%==============================================================================
Since the spectral density of $\zeta$ is
$
    F_T(\re^{i\varphi}) S_T(\varphi) F_T(\re^{i\varphi})^*
$, the common variance of its elements is
\begin{align}
\nonumber
    \bE (|\zeta_k|^2)
    & =
    \frac{1}{2\pi}
    \int_{\mT}
    \Tr (F_T(\re^{i\varphi}) S_T(\varphi) F_T(\re^{i\varphi})^*)
    \rd \varphi\\
\label{varz}
    & =
    \frac{1}{2\pi}
    \int_{\mT}
    \bra \Lambda_T(\varphi), S_T(\varphi)\ket
    \rd \varphi,
    \qquad
    k \in \mZ.
\end{align}
In view of (\ref{varw}), (\ref{varz}), the RMS gain (\ref{RMST}) of the system $F$ is identical to that of its discrete-time counterpart $F_T$, specified above:
\begin{equation}
\label{RMSconf}
    \sn F\sn_S
    =
    \sqrt{\frac{\bE (|\zeta_0|^2)}{\bE (|\varpi_0|^2)}}
    =
    \sn F_T\sn_{S_T},
\end{equation}
where, without loss of generality, the variances in the numerator and denominator are taken at the initial moment of time due to stationarity.\footnote{in the stationary case, it is redundant to consider Cesaro means of moments such as variances instead of their constant values}
The right-hand side of (\ref{RMSconf})\footnote{which is the RMS gain of the discrete-time transfer function $F_T$ with respect to stationary zero-mean Gaussian random sequences $\varpi$ with the spectral density $S_T$} depends on the spectral density $S$ of the continuous time process $W$ through its discrete-time image $S_T$ in (\ref{GGT}) and can be computed for a given $S$ both in the continuous and discrete time domains due to the $T$-dependent conformal correspondence.

For example, if $W$ is a standard Wiener process up to a multiplicative constant (the isotropic white noise case (\ref{S0})),  then the spectral density $S_T$ in (\ref{GGT}) is the same constant scalar matrix, and $\varpi$ is a zero-mean Gaussian white noise sequence with the scalar covariance matrix. In this case, (\ref{RMSconf}) reproduces (\ref{Fgain0}).

%%%%%%%%%%%%%%%%%%%%%%%%%%%%%%%%%%%%%%%%%%%%%%%%%%%%%%%%%%%%%%%%%%%%%%%%%%%%%%%%%
\section{A continuous-time  extension of the anisotropic norm}\label{sec:an}
%%%%%%%%%%%%%%%%%%%%%%%%%%%%%%%%%%%%%%%%%%%%%%%%%%%%%%%%%%%%%%%%%%%%%%%%%%%%%%%%%

If the system input $W$ is a statistically uncertain random disturbance whose
spectral density $S$ is known only up to its membership in a class $\cS$, 
then of interest is the worst-case value
\begin{equation}
\label{supRMS}
    \sn F\sn_{\cS}
    :=
    \sup_{S\in \cS}
    \sn F\sn_S
\end{equation}
of the RMS gain (\ref{RMSconf})
(we have slightly abused the notation). In particular, the class $\cS$ can be described in terms of the deviation of $S$ from constant scalar matrices (which correspond to the isotropic white noise case (\ref{S0})).  Such deviation is quantified, for example, by the mean anisotropy \cite{V_1995a} of the stationary Gaussian random sequence $\varpi$ in $\mR^m$, which, in view of (\ref{FFT}), (\ref{varw}),  takes the form
\begin{align}
\nonumber
    &\bA_T(S)
    :=
    \bA(S_T)\\
\nonumber
    & =
    -\frac{1}{2}
    \ln\det
    \Big(
        \frac{m }{\bE(|\varpi_0|^2)}
        \Gamma
    \Big)\\
\nonumber
    & =
    -\frac{1}{4\pi}
    \int_{\mT}
    \ln\det
    \Big(
        \frac{m}{\bE(|\varpi_0|^2)} S_T(\varphi)
    \Big)
    \rd \varphi\\
\nonumber
    & =
    -
    \frac{1}{4\pi}
    \int_{\mR}
    \Sigma(\omega)
    \ln\det
    \Big(
    \frac{2\pi m }
    {\int_{\mR} \Sigma(\lambda)\Tr S(\lambda)\rd \lambda}
    S(\omega)
    \Big)
    \rd \omega\\
\label{AST}
    & =
    -
    \frac{T}{2\pi}
    \int_{\mR}
    \frac{1}{1+(\omega T)^2}
    \ln\det
    \left(
    \frac{\pi m }
    {T\int_{\mR} \frac{\Tr S(\lambda)}{1+(\lambda T)^2}\rd \lambda}
    S(\omega)
    \right)
    \rd \omega
\end{align}
(the matrix $\Gamma$ is defined below).
This quantity is finite and nonnegative if the spectral density $S$ satisfies the condition
\begin{equation}
\label{PR}
    \int_{\mT}
    \ln\det S_T(\varphi)
    \rd \varphi
    =
    2T
    \int_{\mR}
    \frac{\ln\det S(\omega)}{1 + (\omega T)^2}
    \rd \omega
    >-\infty,
\end{equation}
whose fulfillment does not depend on a particular choice of $T>0$. Since 
the transfer function $G_T$ in (\ref{GT}) belongs to the Hardy space 
of $\mC^{m\x m}$-valued functions, analytic in the open unit disk 
and square integrable over the unit circle, 
then
(\ref{PR}) is equivalent to $\det G_T(\varphi)\ne 0$ for almost all $\varphi \in \mT$. This full-rank condition, associated with the inner-outer factorization \cite{W_1972} of the spectral density $S_T$ in (\ref{GGT}), reflects the absence of linear dependencies between the entries of the random vectors $\varpi_k$ at the same or different moments of time. Also, this property is  closely related to the total unpredictability \cite{R_1967} of such random sequences 
in the sense of nonsingularity of the conditional covariance matrix
\begin{equation}
\label{Gamma}
    \Gamma:= \cov(\varpi_k \mid \cF_{k-1})
\end{equation}
(which does not depend on $k$ and is nonrandom in the stationary Gaussian case being considered)  given the past history
$(\varpi_j)_{j<k}$ of the sequence $\varpi$. Here, the $\sigma$-algebras  $\cF_k := \sigma\{\varpi_k, \varpi_{k-1}, \varpi_{k-2}, \ldots\}$ form the natural filtration for $\varpi$. The connection between $\Gamma$ and $S_T$ is described by the Szego-Kolmogorov theorem \footnote{the use of this relation in \cite[Eq. (3.7) on p. 62]{BAK_2018} contains an error, where the rightmost factor $\frac{1}{4\pi}$ is incorrect and should be replaced with $\frac{1}{2}$ in accordance with the second equality in (\ref{AST})}
\begin{align*}
\nonumber
    \ln \det \Gamma
    & =
    \lim_{N\to+\infty}
    \Big(
    \frac{1}{N}
    \ln \det \cov(\varpi_{0:N-1})
    \Big)\\
    & =
    \frac{1}{2\pi}
    \int_{\mT}
    \ln\det S_T(\varphi)
    \rd \varphi ,
\end{align*}
where $\varpi_{0:N-1}:= (\varpi_k)_{0\< k < N}$ is an $\mR^{mN}$-valued zero-mean Gaussian random vector whose covariance matrix is recovered from the spectral density $S_T$ as
$$    \bE(\varpi_j \varpi_k^\rT)
    =
    \frac{1}{2\pi}
    \int_{\mT}
    \re^{i(j-k)\varphi}
    S_T(\varphi)
    \rd \varphi.
$$
In application to the sequence $\varpi$, the origin of the mean anisotropy functional (\ref{AST}) is clarified by the limit theorems \cite{V_1995a,V_2006}
\begin{align*}
    \lim_{N\to +\infty}
    \Big(&
        \frac{1}{N}
        \bD(\mho_N \| U_{mN})
    \Big)\\
    & =
    \lim_{N\to +\infty}
    \Big(
        \frac{1}{N}
        \min_{\lambda >0}
        \bD(
            \varpi_{0:N-1}
            \|
            \cN_{\lambda I_{mN}}
        )
    \Big)\\
    & =
    \bA(S_T).
\end{align*}
Here, $\bD(\mho_N \| U_{mN})$ is the Kullback-Leibler relative entropy \cite{CT_2006} of the probability distribution of the random unit vector $\mho_N:= \frac{1}{|\varpi_{0:N-1}|} \varpi_{0:N-1}$ with respect to the uniform distribution $U_{mN}$ on the unit sphere in $\mR^{mN}$, and $\bD(\varpi_{0:N-1} \| \cN_{\lambda I_{mN}})$ is the relative entropy of the probability distribution of 
$\varpi_{0:N-1}$ with respect to the Gaussian distribution $\cN_{\lambda I_{mN}}$ in  $\mR^{mN}$ with zero mean and scalar covariance matrix $\lambda I_{mN}$.

Being always nonnegative, the mean anisotropy $\bA_T(S)$ in (\ref{AST}) vanishes if and only if the spectral density $S_T$, and hence, $S$  in (\ref{GGT}), is a constant scalar matrix, which holds only
when the input disturbance $W$ is a standard Wiener process up to a multiplicative constant.
Similarly to discrete-time settings, this allows $\bA_T(S)$ to be used as a measure of deviation from the nominal isotropic white-noise model for specifying the uncertainty class $\cS$ in (\ref{supRMS}):
\begin{equation}
\label{cSTa}
    \cS_{T,a}
    :=
    \{S: \mR\to \mC^{m\x m}:\ \bA_T(S) \< a\}.
\end{equation}
This class consists of all those spectral densities $S$  of the input disturbance process $W$, whose discrete-time counterparts $S_T$ in (\ref{GGT}) satisfy the upper constraint on the mean anisotropy (\ref{AST}) of the corresponding stationary Gaussian sequence $\varpi$ with variance (\ref{varw}). Since the map $S\mapsto S_T$ is bijective, the worst-case RMS gain (\ref{supRMS}), associated with (\ref{cSTa}), takes the form
\begin{align}
\nonumber
    \sn F\sn_{T,a}
    & :=
    \sup_{S \in \cS_{T,a}}
    \sn F \sn_S\\
\label{FTa}
    & =
    \sup_{S \in \cS_{T,a}}
    \sqrt{
    \frac{\int_{\mT} \bra \Lambda_T(\varphi),  S_T(\varphi)\ket\rd \varphi}
    {\int_{\mT} \Tr S_T(\varphi)\rd \varphi}}
    =
        \sn F_T\sn_a
\end{align}
and coincides with the  $a$-anisotropic norm of the discrete-time system $F_T$. The latter norm (and hence, the two-parameter $(T,a)$-anisotropic norm $\sn F\sn_{T,a}$ of the underlying LCTI system)  lends itself to state-space computation.

%%%%%%%%%%%%%%%%%%%%%%%%%%%%%%%%%%%%%%%%%%%%%%%%%%%%%%%%%%%%%%%%%%%%%%%%%%%%%%%%%
\section{Computing the continuous-time anisotropic norm in state space}\label{sec:state}
%%%%%%%%%%%%%%%%%%%%%%%%%%%%%%%%%%%%%%%%%%%%%%%%%%%%%%%%%%%%%%%%%%%%%%%%%%%%%%%%%

In view of (\ref{Omega}), (\ref{Ftrans}), (\ref{K}), the transfer function $F_T$  in  (\ref{FT}) takes the form
\begin{align}
\nonumber
    F_T(z) &= C\Big(\Omega\frac{1-z}{1+z}I_n - A\Big)^{-1} B + D\\
\nonumber
    & = (1+z) TC(I_n-TA - z(I_n + TA))^{-1}B+D\\
\nonumber
    & = (1+z) C(I_n - z A_T)^{-1}B_T+D\\
\label{FT1}
    & =
    zC_T (I_n - zA_T)^{-1}B_T+D_T
\end{align}
(that is, the generating function for the impulse response with the $z$-transform $F_T(1/z)$)  and corresponds to the LDTI system
\begin{equation}
\label{nextx}
    x_{k+1} = A_T x_k + B_T \varpi_k,
    \qquad
    \zeta_k = C_T x_k + D_T \varpi_k
\end{equation}
with an $\mR^n$-valued state sequence $x:=(x_k)_{k \in \mZ}$ and the state-space realization matrices
\begin{align}
\label{AT}
    A_T
    & := (I_n + TA)(I_n - TA)^{-1},\\
\label{BT}
    B_T
     & :=
    T(I_n - TA)^{-1}B,\\
\label{CT}
    C_T
    & :=
    2C(I_n-TA)^{-1},\\
\label{DT}
    D_T
    & :=
    TC(I_n - TA)^{-1}B + D.
\end{align}
The system (\ref{nextx}) corresponds formally to a finite-difference scheme for numerical integration of the SDEs (\ref{dZ}), (\ref{dX}) by the trapezoidal rule with stepsize $2T$
(such interpretation assumes smallness of the time scale parameter $T$ in the sense of the second relation in (\ref{Tggll})).
For a given $T>0$, the relations (\ref{AT})--(\ref{DT}) describe a smooth bijection $\Ups: (A,B,C,D)\mapsto (A_T, B_T, C_T, D_T)$ between the open sets of matrix quadruples such that $A$ is Hurwitz and $\rho(A_T)<1$. The structure of dependencies between the matrices under the map $\Ups$ is shown in Fig.~\ref{fig:Tustin}
%==============================================================================
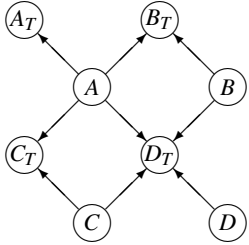
\begin{figure}[htbp]
\unitlength=0.6mm
\linethickness{0.4pt}
\centering
\begin{picture}(110,65.00)
\put(30,60){\makebox(0,0)[cc]{\small$A_T$}}
\put(60,60){\makebox(0,0)[cc]{\small$B_T$}}
\put(30,30){\makebox(0,0)[cc]{\small$C_T$}}
\put(60,30){\makebox(0,0)[cc]{\small$D_T$}}

\put(42,48){\vector(-1,1){9}}
\put(48,48){\vector(1,1){9}}
\put(42,42){\vector(-1,-1){9}}
\put(48,42){\vector(1,-1){9}}
\put(72,48){\vector(-1,1){9}}
\put(42,18){\vector(-1,1){9}}
\put(48,18){\vector(1,1){9}}
\put(72,42){\vector(-1,-1){9}}
\put(72,18){\vector(-1,1){9}}

\put(45,45){\makebox(0,0)[cc]{\small$A$}}
\put(75,45){\makebox(0,0)[cc]{\small$B$}}
\put(45,15){\makebox(0,0)[cc]{\small$C$}}
\put(75,15){\makebox(0,0)[cc]{\small$D$}}

\put(60,30){\circle{8}}

\put(45,45){\circle{8}}
\put(45,15){\circle{8}}
\put(75,45){\circle{8}}
\put(75,15){\circle{8}}

\put(30,60){\circle{8}}
\put(30,30){\circle{8}}
\put(60,60){\circle{8}}
\end{picture}\vskip-8mm
\caption{The graph of dependencies between the matrices under the map $\Ups$, described by (\ref{AT})--(\ref{DT}). Here, $\bigcirc\!\!\!\!\!\!\!u\! \longrightarrow\!\! \! \bigcirc\!\!\!\!\!\!\!v$ indicates dependence of $v$ on $u$. For example, the matrix $A_T$ depends only on $A$, the matrix $B_T$ depends on $A$ and $B$, while the matrix $D_T$ depends on all four matrices $A,B,C,D$.}
\label{fig:Tustin}
\end{figure}
%=================================================================================
and is inherited by the inverse map $\Ups^{-1}: (A_T, B_T, C_T, D_T)\mapsto (A,B,C,D)$, which, in view of (\ref{Omega}), takes the form
\begin{align}
\label{A}
    A
    & =
    \Omega (I_n+A_T)^{-1}(A_T-I_n),\\
\label{B}
    B
     & =
     2\Omega(I_n+A_T)^{-1}B_T,\\
\label{C}
    C
    & =
    C_T(I_n+A_T)^{-1},\\
\label{D}
    D
    & =
    D_T-C_T(I_n+A_T)^{-1}B_T.
\end{align}
Relations,   similar to (\ref{FT1}), (\ref{AT})--(\ref{D}),  also hold
 for the transfer function $G_T$ of the discrete-time shaping filter in (\ref{GT}). 
As in the discrete-time case, the computation of the $(T,a)$-anisotropic norm  (\ref{FTa}) for the LCTI system being considered  is of interest only if this system is nonround and the mean anisotropy level $a$ is strictly positive (otherwise, the norm reduces trivially to the weighted $\cH_2$-norm $\frac{1}{\sqrt{m}}\|F_T\|_2$).

\begin{theorem}
 \label{th:an}
 Suppose the system (\ref{dZ}), (\ref{dX}) is stable and nonround. Then its $(T,a)$-anisotropic norm (\ref{FTa}) (for any given $T>0$, $a> 0$) can be computed as
 \begin{equation}
 \label{FTa1}
   \sn F\sn_{T,a}
   =
   \sqrt{
   \frac{1}{q}
   \Big(
    1 - \frac{m}{\Tr (L_T P_T L_T^\rT + M_T^2)}
   \Big)
   }.
 \end{equation}
 Here, $P_T=P_T^\rT\in \mR^{n\x n}$ is a positive semi-definite matrix which is the controllability Gramian for the pair $(A_T+B_T L_T, B_T M_T)$ and is a unique solution of the algebraic Lyapunov equation (ALE)
 \begin{equation}
 \label{PALE}
        P_T = (A_T+B_T L_T)P_T (A_T+B_T L_T)^\rT + B_T M_T^2 B_T^\rT,
\end{equation}
where the matrices  $A_T, B_T, C_T, D_T$ are given by (\ref{AT})--(\ref{DT}), and the matrices $L_T \in \mR^{m\x n}$,  $M_T=M_T^\rT\in \mR^{m\x m}$ (with $M_T \succ 0 $) are associated with a unique admissible solution $R_T=R_T^\rT\in \mR^{n\x n}$ of the algebraic Riccati equation (ARE)
\begin{align}
\label{Ric1}
    R_T
    & = A_T^\rT R_T A_T + q C_T^\rT C_T + L_T^\rT M_T^{-2} L_T,\\
\label{Ric2}
    L_T
    & := M_T^2 (B_T^\rT R_T A_T + q D_T^\rT C_T),\\
\label{Ric3}
    M_T
    & := (I_m - qD_T^\rT D_T - B_T^\rT R_T B_T)^{-1/2}.
\end{align}
The admissibility of $R_T$ is understood in the sense that  $I_m - qD_T^\rT D_T - B_T^\rT R_T B_T\succ 0$ is satisfied together with the stability condition
\begin{equation}
\label{stab}
    \rho(A_T+B_T L_T)< 1,
\end{equation}
and the parameter $0< q < \frac{1}{\|F\|_\infty^2}$ (on which the matrices $R_T, L_T, M_T, P_T$ also depend) is a unique solution of the equation
\begin{equation}
\label{logdet}
  -\frac{1}{2}
  \ln\det
  \Big(
    \frac{m }{\Tr (L_T P_T L_T^\rT + M_T^2)}
    M_T^2
  \Big)
  =
  a.
\end{equation}
The worst-case spectral density is unique up to a multiplicative positive constant and is implemented by an input disturbance $W$ according to the SDE
\begin{equation}
\label{Wworst}
  \rd W = L X\rd t + M \rd V,
\end{equation}
where the matrices $L\in\mR^{m\x n}$, $M\in\mR^{m\x m}$ (with $M$ not necessarily symmetric) are given by
\begin{align}
\label{L}
    L & = L_T(I_n + A_T + B_TL_T)^{-1},\\
\label{M}
    M & = (I_m-LB_T)M_T.
\end{align}
\end{theorem}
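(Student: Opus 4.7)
The plan is to reduce the problem, via the conformal correspondence of Section~\ref{sec:conf}, to the computation of the $a$-anisotropic norm of the discrete-time system $F_T$ from the anisotropy-based theory, and then to unwind the discrete-time worst-case shaping filter back into the continuous-time SDE (\ref{Wworst}). The reduction is already furnished by (\ref{FTa}): the map $S\mapsto S_T$ from (\ref{GGT}) is a bijection on admissible spectral densities that preserves the mean anisotropy functional by the very definition (\ref{AST}), so $\sn F\sn_{T,a} = \sn F_T\sn_a$, and it then suffices to apply the existing discrete-time machinery to the state-space realization (\ref{FT1}).

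First I would verify that the quadruple $(A_T,B_T,C_T,D_T)$ from (\ref{AT})--(\ref{DT}) realizes $F_T$ as in (\ref{FT1}), and then invoke the state-space formulas for the discrete-time $a$-anisotropic norm whose derivation (by a Lagrangian saddle-point analysis on the convex problem of maximizing the output-to-input variance ratio over spectral densities with bounded mean anisotropy) yields (\ref{FTa1})--(\ref{logdet}) verbatim for $(A_T,B_T,C_T,D_T)$. In this derivation, the parameter $q\in(0,1/\|F_T\|_\infty^2) = (0,1/\|F\|_\infty^2)$ (the last equality from $F_T(\re^{i\varphi}) = F(-i\Omega\tan(\varphi/2))$) is the Lagrange multiplier for the quadratic constraint; the ARE (\ref{Ric1})--(\ref{Ric3}) characterises the innovations-form worst-case shaping filter with state matrix $A_T+B_TL_T$, noise gain $M_T$, and output $\varpi_k = L_Tx_k + M_T\ups_k$; the ALE (\ref{PALE}) provides the stationary state covariance $P_T$ under this filter; combining these via (\ref{varz}) and (\ref{varw}) gives (\ref{FTa1}); and (\ref{logdet}) enforces the constraint $\bA(S_T) = a$ through the Szeg\"o--Kolmogorov formula of Section~\ref{sec:an} specialised to $\Gamma = M_T^2$.

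Next I would recover the continuous-time worst-case disturbance by positing the Ansatz $\rd W = LX\rd t + M\rd V$ and demanding that the Tustin transform (\ref{AT})--(\ref{DT}) of the closed-loop continuous-time quadruple $(A+BL,BM,L,M)$ agree with the discrete-time closed-loop quadruple $(A_T+B_TL_T, B_TM_T, L_T, M_T)$. Matching the ``$C_T$-entry'' $2L(I_n - T(A+BL))^{-1} = L_T$ and using the elementary factorisation $I_n + A_T + B_TL_T = (I_n - TA)^{-1}(2I_n + TBL_T)$ (a direct consequence of (\ref{AT}) and (\ref{BT})) together with the push-through identity $L_T(2I_n + TBL_T)^{-1} = (2I_m + TL_TB)^{-1}L_T$ yields (\ref{L}). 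Matching the ``$D_T$-entry'' $TL(I_n - T(A+BL))^{-1}BM + M = M_T$ and observing (again by push-through, with $P:=LB_T$) that $I_m + TL(I_n - T(A+BL))^{-1}B = I_m + (I_m - P)^{-1}P = (I_m - P)^{-1}$ then gives (\ref{M}). Uniqueness of the worst-case spectral density up to a positive scalar descends from the corresponding uniqueness in the discrete-time theory and the scale invariance (\ref{kappaS}).

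The hard part will be the technical verification that the hypotheses underlying the discrete-time state-space derivation survive the bilinear correspondence $\Ups$: that the full-rank condition (\ref{PR}) on $S$ transfers correctly to $S_T$; that the ARE (\ref{Ric1})--(\ref{Ric3}) admits a unique admissible $R_T$ (with $M_T^2 \succ 0$ and satisfying (\ref{stab})) throughout $q\in(0,1/\|F\|_\infty^2)$; and that the right-hand side of (\ref{logdet}), viewed as a function of $q$, is strictly monotone so the equation has a unique root. These existence and monotonicity claims are known in the discrete-time anisotropy-based theory but must be checked to pass through $\Ups$ intact, which in turn relies on the nonroundness hypothesis on $F$ transferring to nonroundness of $F_T$ over $\mT$ --- an immediate consequence of (\ref{FFT}) together with the bijectivity of $\omega\leftrightarrow\varphi$ in (\ref{omphi}).
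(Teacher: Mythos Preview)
Your proposal is correct and follows essentially the same route as the paper: reduce to the discrete-time $a$-anisotropic norm of $F_T$ via (\ref{FTa}) and the established state-space theory \cite{V_1996a}, then pull the worst-case shaping filter back through the bilinear correspondence. The only notable difference is in how (\ref{L}), (\ref{M}) are obtained: the paper simply applies the explicit inverse formulas (\ref{C}), (\ref{D}) to the discrete-time shaping filter quadruple $(A_T+B_TL_T,\,B_TM_T,\,L_T,\,M_T)$ in (\ref{GT2}), which yields $L = L_T(I_n + A_T + B_TL_T)^{-1}$ and $M = M_T - L B_T M_T$ in one line each, whereas you set up the forward matching equations and solve them with push-through identities --- equivalent but slightly less direct. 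The paper also supplements the argument with the probabilistic interpretation (the isometric auxiliary system $\Theta_T$, the Doob decomposition giving $\Gamma = M_T^2$), which you subsume under the citation of the discrete-time theory; this is a matter of exposition rather than a different proof strategy.
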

%%%%%%%%%%%%%%%%%%%%%%%%%%%%%%%%%%%%%%%%%%%%%%%%%%%%%%%%%%%%%%%%%%%%%%%%%%%%%%%%%%%%%%%%%%%%%%%%%
\begin{proof}
In regard to computing the $(T,a)$-anisotropic norm (\ref{FTa}),  the assertion of the theorem follows from the discrete-time result \cite[Theorem~2]{V_1996a} in application to the system (\ref{nextx})--(\ref{DT}). However, we will provide the key ideas of the proof elucidating the probabilistic structure of the worst-case disturbance. Note that the stability and nonroundness of $F$ are equivalent to the corresponding properties  of its discrete-time counterpart $F_T$. Furthermore, both systems have equal $\cH_\infty$-norms in the appropriate Hardy spaces associated with the right half-plane and unit disk: $\|F\|_\infty  = \|F_T\|_\infty$.
Recall that the accompanying material of \cite[Section~5]{V_1996a} (given in more detail in \cite[Section~8]{V_2001a}) describes the structure
\begin{equation}
\label{worst}
    \varpi_k = L_T x_k + M_T \ups_k
\end{equation}
of the input disturbance of the discrete-time system (see Fig.~\ref{fig:worst}),
%==============================================================================
\begin{figure}[htbp]
\unitlength=0.6mm
\linethickness{0.4pt}
\centering
\begin{picture}(125,40.00)
    \put(35,25){\framebox(10,10)[cc]{{$F_T$}}}
    \put(65,30){\circle{6}}
    \put(65,30){\makebox(0,0)[cc]{\small$+$}}

    \put(85,25){\framebox(10,10)[cc]{{$M_T$}}}
    \put(60,0){\framebox(10,10)[cc]{{$L_T$}}}
    \put(65,10){\vector(0,1){17}}
    \put(40,26){\makebox(0,0)[cc]{\tiny$\bullet$}}
    \put(40,26){\line(0,-1){21}}
    \put(40,5){\vector(1,0){20}}
    \put(35,30){\vector(-1,0){20}}
    \put(62,30){\vector(-1,0){17}}
    \put(85,30){\vector(-1,0){17}}
    \put(115,30){\vector(-1,0){20}}

    \put(10,30){\makebox(0,0)[cc]{$\zeta$}}
    \put(120,30){\makebox(0,0)[cc]{$\ups$}}
    \put(35,5){\makebox(0,0)[cc]{$x$}}

    \put(55,35){\makebox(0,0)[cc]{$\varpi$}}
\end{picture}
\caption{The worst-case shaping filter for the discrete-time system $F_T$ is organised as a ``parasitic'' feedback loop, through which the current state $x_k$ of the system enters the predictable part $L_T x_k$ of the Doob decomposition (\ref{Doob}) of the input disturbance $\varpi_k$.
}
\label{fig:worst}
\end{figure}
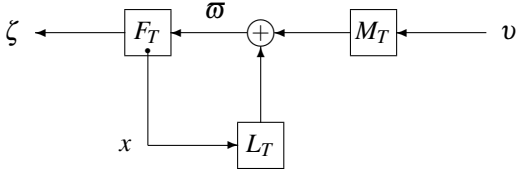
%==============================================================================
implementing the worst-case spectral density $S_T$ in (\ref{GGT}), which
is unique up to a multiplicative positive constant \cite[Theorem~1]{V_1996a}  and has the form
$S_T = (I_m - q\Lambda_T)^{-1}$ for some $0< q < \frac{1}{\|F\|_\infty^2}$. This representation of $S_T$ in terms of $\Lambda_T$ is equivalent to the isometric property of an auxiliary discrete-time system
\begin{equation}
\label{Theta}
\Theta_T
:=
\begin{bmatrix}
  \sqrt{q} F_T
  \\
  G_T^{-1}
\end{bmatrix}
=
\left[
\begin{array}{c|c}
  A_T & B_T\\
  \hline
  \sqrt{q} C_T & \sqrt{q}D_T\\
  -M_T^{-1}L_T & M_T^{-1}
\end{array}
\right]
\end{equation}
 with the input $\varpi$ and output ${\small\begin{bmatrix}
  \sqrt{q} \zeta\\ \ups
\end{bmatrix}}$.
The right-hand side of (\ref{Theta}) describes the state-space realization matrices of the system $\Theta_T$, with its bottom block row corresponding to the equation $\ups_k = -M_T^{-1}L_T x_k +M_T^{-1}\varpi_k$ obtained from (\ref{worst}) due to $\det M_T \ne 0$.
The condition that the system $\Theta_T$ is isometric is understood in the sense that if $\varpi$ were an arbitrary square summable sequence in  $\mR^m$ (instead of being a stationary Gaussian sequence whose sample paths are square summable only in  the trivial case $\varpi = 0$) and hence, so also were $\zeta$, $\ups$, then their $\ell_2$-norms would be related by $\|\varpi\|_2^2 = q \|\zeta\|_2^2 + \|\ups\|_2^2$. However, in the case of the worst-case stationary  Gaussian sequence $\varpi$ being considered, the isometric property of the system  (\ref{Theta}) manifests itself in terms of the variances as $\bE (|\varpi_k|^2) =
q \bE(|\zeta_k|^2) + m $, where use is also made of the relation $\bE(|\ups_k|^2) = \Tr I_m = m$ which follows from (\ref{EvEvv}). With the matrix $R_T$ in (\ref{Ric1}) being the observability Gramian of the system (\ref{Theta}), the other two equations in the ARE (\ref{Ric1})--(\ref{Ric3}) describe a sufficient state-space condition\footnote{this condition is also necessary if the pair $(A_T,B_T)$ is controllable \cite{GTOP_1989}} for the system $\Theta_T$ to be isometric \cite{V_1996a}.  Similarly to its inverse $G_T^{-1}$ in (\ref{Theta}), the shaping filter
\begin{equation}
\label{GT2}
    G_T
    =
    \left[
    \begin{array}{c|c}
        A_T+B_T L_T  & B_T M_T\\
        \hline
        L_T & M_T
    \end{array}
    \right],
\end{equation}
governed by the first of the equations (\ref{nextx}) in combination with (\ref{worst}),  shares the common state $x$ with the system $F_T$:
\begin{equation}
\label{Gworst}
    x_{k+1} = (A_T+B_T L_T) x_k + B_T M_T \ups_k,
    \quad
    \varpi_k = L_T x_k + M_T \ups_k,
\end{equation}
where, as before, the input of the filter is the Gaussian white-noise sequence $\ups$ in  $\mR^m$ with zero mean and identity covariance matrix. The stability of the filter
(\ref{Gworst}) is secured by the condition (\ref{stab}) and, in combination with  (\ref{EvEvv}), leads to the equation (\ref{PALE}) for the covariance matrix $P_T := \bE(x_k x_k^\rT)$ of the system state under the worst-case input disturbance being considered. Accordingly, the common denominator in (\ref{FTa1}), (\ref{logdet}) is the variance of the output of the shaping filter: $\bE(|\varpi_k|^2) = 
\Tr (L_T P_T L_T^\rT + M_T^2)$ in view of the symmetry $M_T=M_T^\rT$. Due to the stability of the shaping filter and the system itself ($\rho(A_T)< 1$), and also since $\det M_T\ne 0$, the sequence $\varpi$ generates the same filtration $\cF:= (\cF_k)_{k \in \mZ}$ as $\ups$, with (\ref{worst}) providing its Doob decomposition \cite{S_1996} into the $\cF$-predictable and innovation components
\begin{equation}
\label{Doob}
    \bE(\varpi_k \mid \cF_{k-1})
    =
    L_T x_k,
    \qquad
    \varpi_k - \bE(\varpi_k \mid \cF_{k-1}) = M_T \ups_k.
\end{equation}
 Hence, with $M_T$ being symmetric, the conditional covariance matrix (\ref{Gamma}) takes the form $\Gamma = \cov(M_T \ups_k \mid \cF_{k-1}) = M_T \cov(\ups_k) M_T^\rT = M_T^2$, thus leading to the numerator $M_T^2$ in (\ref{logdet}).
The structure of the equivalent continuous-time  input disturbance $W$ is obtained by applying the inverse map $\Ups^{-1}$ (in accordance with (\ref{A})--(\ref{D}))  to the worst-case shaping filter (\ref{GT2}), which leads to (\ref{Wworst})--(\ref{M}). Therefore, in view of (\ref{dX}), the worst-case continuous-time shaping filter $G$ is governed by
\begin{equation}
\label{ABL}
    \rd X
    =
    (A+BL) X \rd t  + BM \rd V,
    \qquad
    \rd W = L X\rd t + M \rd V.
\end{equation}
Due to the properties of the map $\Ups$ (see also Fig.~\ref{fig:Tustin}),  the dynamics matrix of the SDE (\ref{ABL}) satisfies
$
    A + BL
    =
    \Omega (I_n+A_T+B_T L_T)^{-1}(A_T+B_T L_T-I_n)
$ and,  in view of the condition (\ref{stab}), is Hurwitz.
\end{proof}

Similarly to the discrete-time representation in Fig.~\ref{fig:worst}, the worst-case  disturbance $W$ at the input of the LCTI system (\ref{dZ}), (\ref{dX}) is formed in (\ref{Wworst}) by the feedback loop,  through which the system state $X$ enters the drift $LX$ of the disturbance.

%%%%%%%%%%%%%%%%%%%%%%%%%%%%%%%%%%%%%%%%%%%%%%%%%%%%%%%%%%%%%%%%%%%%%%%%%%%%%%%%%
\section{An example of computing the two-parameter anisotropic norm}\label{sec:ex}
%%%%%%%%%%%%%%%%%%%%%%%%%%%%%%%%%%%%%%%%%%%%%%%%%%%%%%%%%%%%%%%%%%%%%%%%%%%%%%%%%

Consider a stable LCTI system with the state, input and output dimensions $n=4$, $m=3$, $p=2$ and the following state-space realization matrices:
\begin{align*}
&\left[{\begin{array}{c|c}
  A & B \\
  \hline
  C & D
\end{array}}
\right]\\
&= \left[
    {\tiny\begin{array}{cccc|ccc}
   -1.8396 &   0.1240 &  -1.2078 &  -1.0582&    -0.2779 &  -0.8236 &   0.0335\\
    1.3546 &   0.4367 &   2.9080 &  -0.4686&     0.7015 &  -1.5771 &  -1.3337\\
   -1.0722 &  -1.9609 &  -0.1748 &  -0.2725&    -2.0518 &   0.5080 &   1.1275\\
    0.9610 &  -0.1977 &   1.3790 &   0.0984&    -0.3538 &   0.2820 &   0.3502 \\
    \hline
   -0.2991 &  -0.2620 &  -0.2857 &  -0.9792&   -0.5336  &  0.9642  & -0.0200\\
    0.0229 &  -1.7502 &  -0.8314 &  -1.1564&   -2.0026  &  0.5201  & -0.0348
    \end{array}}
    \right].
\end{align*}
The eigenvalues of $A$ are
$-0.2406 \pm 2.0265i$, $-0.7598$, $-0.2384$, and the corresponding bounds in (\ref{Tggll}) are $
    \tfrac{1}{\rho(A)} = 0.4900$,
    $\rho(A^{-1}) = 4.1953
$. The lower of these bounds exceeds noticeably the time scale parameter $T=0.1890$ of the conformal correspondence (with the cutoff frequency $\Omega = 5.2923$ in (\ref{Omega})) used below. Therefore, the effective frequency range of the low-pas filter in (\ref{dVT})--(\ref{dZT}), which specifies the filtered input and output $W_T$, $Z_T$ for the RMS gain (\ref{limFgain}), reflects adequately the transient processes in the system. The matrices
(\ref{AT})--(\ref{DT}) of the discrete-time counterpart of the original system take the form
\begin{align*}
&\left[{\begin{array}{c|c}
  A_T & B_T \\
  \hline
  C_T & D_T
\end{array}}
\right]\\
&= \left[
    {\tiny\begin{array}{cccc|ccc}
    0.5246 &   0.1804 &  -0.3191 &  -0.3102 &   0.0442 &  -0.1691 &  -0.0622 \\
    0.1844 &   0.8411 &   0.8756 &  -0.2496 &  -0.0442 &  -0.2533 &  -0.1464 \\
   -0.3739 &  -0.6849 &   0.6671 &   0.0505 &  -0.3604 &   0.2125 &   0.2644 \\
    0.1758 &  -0.2185 &   0.3503 &   1.0034 &  -0.1540 &   0.0891 &   0.1317 \\
    \hline
   -0.5696 &  -0.1267 &  -0.9532 &  -1.8180 &  -0.2815 &   0.9332 &  -0.1675 \\
   -0.1803 &  -2.3962 &  -3.3308 &  -1.9290 &  -1.4465 &   0.6799 &  -0.1520
    \end{array}}
    \right].
\end{align*}
The behaviour of the singular values of the transfer function $F_T$ in (\ref{FT1}) on the unit circle
(see Fig.~\ref{fig:FTsingval}) show that the system is substantially nonround.
\begin{figure}[htb]
\begin{center}
\includegraphics[scale=0.29]{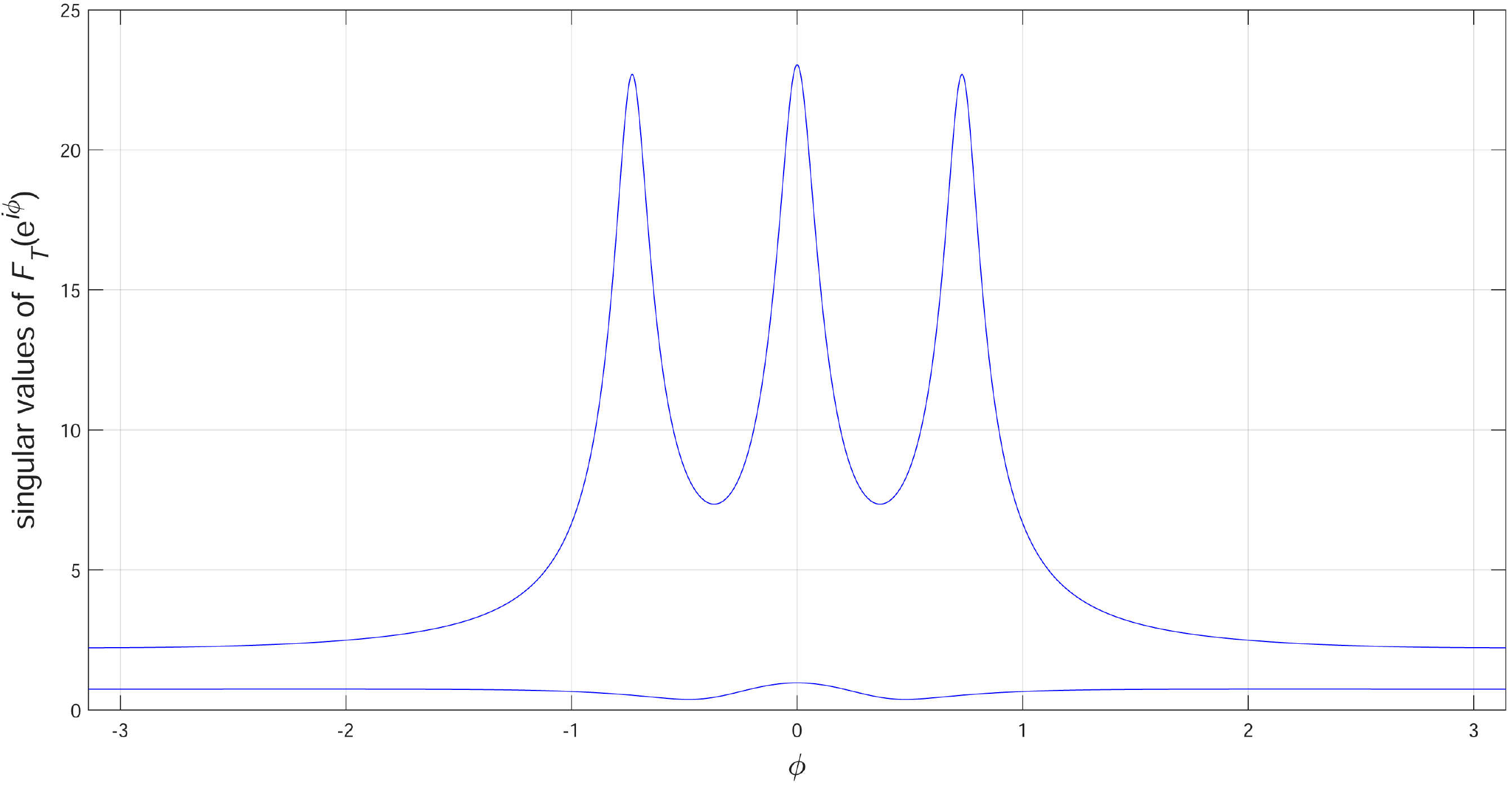}
\caption{Singular values of the discrete-time transfer matrix $F_T(z)$ in (\ref{FT1}) on the unit circle $|z|=1$.}\label{fig:FTsingval}
\end{center}
\end{figure}
Accordingly, there is a large gap between the scaled $\cH_2$-norm and the $\cH_\infty$-norm  of the discrete-time system:
$
    \frac{1}{\sqrt{m}}
    \|F_T\|_2 = 4.6211$,
$
    \|F_T\|_{\infty} = 23.0381
$. These norms are the endpoints of the range of the $(T,a)$-anisotropic norm $\sn F\sn_{T,a}$ of the system (as the mean anisotropy level $a$ varies from $0$ to $+\infty$),  computed using Theorem~\ref{th:an} (with Newton's iterations \cite[Section 4]{V_1996a}, \cite[Section 9]{V_2001a}) and shown in Fig.~\ref{fig:FTa}.
%=====================================================================================================
\begin{figure}[htb]
\begin{center}
\includegraphics[scale=0.29]{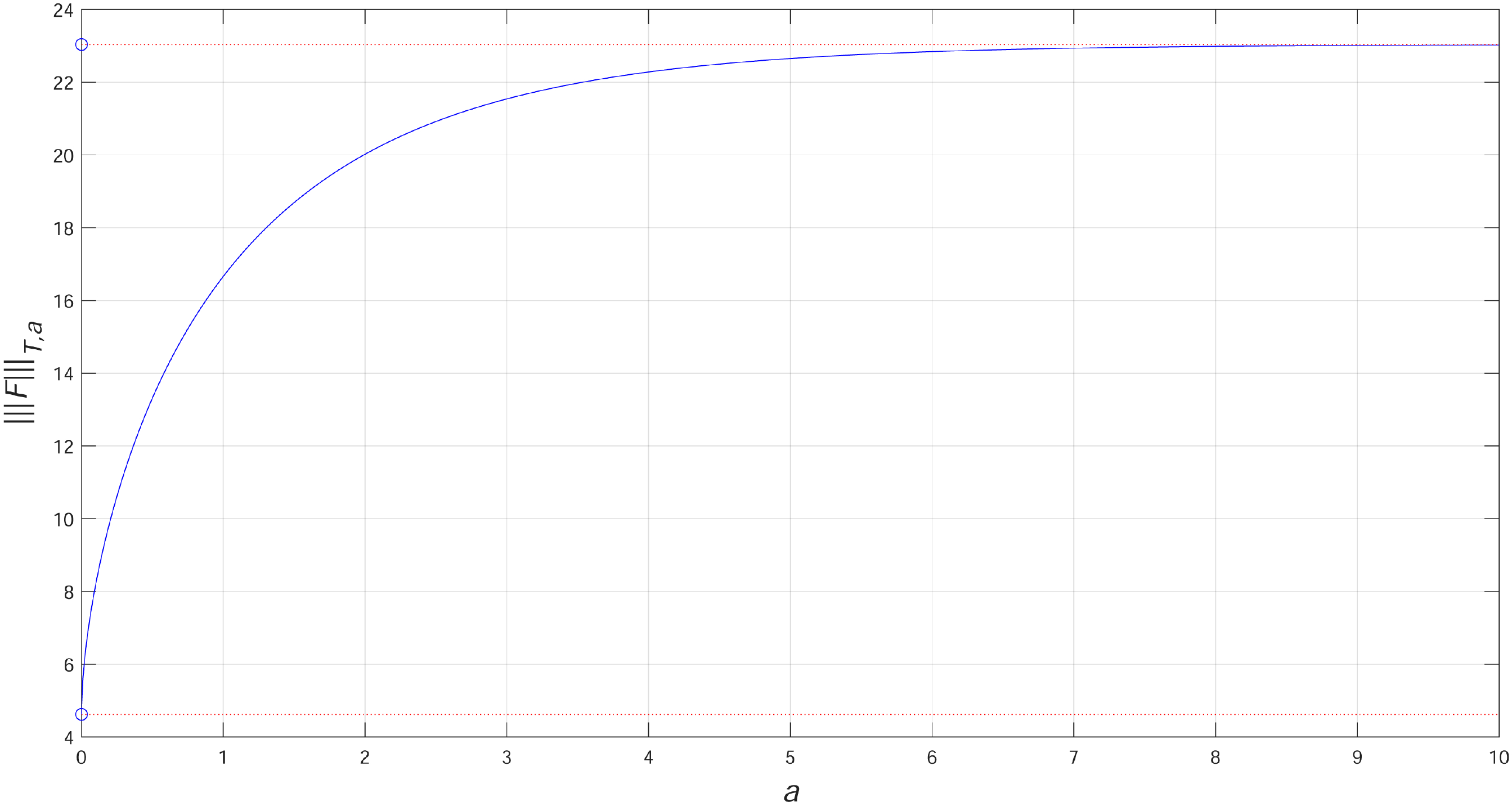}
\caption{The two-parameter anisotropic norm $\sn F\sn_{T,a}$ of the LCTI system as a function of the input mean anisotropy level $a\>0$ for the discrete-time counterpart of the system.}\label{fig:FTa}
\end{center}
\end{figure}
%=====================================================================================================
For example, at $a=1.2264$, the norm being considered is $\sn F\sn_{T,a} = 17.6938$, and the matrices (\ref{L}), (\ref{M}) of the worst-case shaping filter (\ref{Wworst}) take the form
\begin{align*}
    L
    & =
    {\small\begin{bmatrix}
   -0.0286 &   0.0285 &  -0.0657 &  -0.0423\\
   -0.0220 &  -0.1366 &  -0.0369 &   0.1185\\
    0.0356 &  -0.1055 &  -0.0105 &   0.1713
    \end{bmatrix}},\\
    M
& =
    {\small\begin{bmatrix}
    1.0040 &   0.0091 &   0.0048\\
   -0.0121 &   1.0011 &   0.0034\\
   -0.0046 &  -0.0036 &   1.0000
    \end{bmatrix}}.
\end{align*}
Note that $M$ is close to the identity matrix, while $L$ is relatively small.

%%%%%%%%%%%%%%%%%%%%%%%%%%%%%%%%%%%%%%%%%%%%%%%%%%%%%%%%%%%%%%%%%%%%%%%%%%%%%%%
\section{Conclusion}\label{sec:conc}
%%%%%%%%%%%%%%%%%%%%%%%%%%%%%%%%%%%%%%%%%%%%%%%%%%%%%%%%%%%%%%%%%%%%%%%%%%%%%%%

We have discussed a particular way of extending the discrete-time anisotropy-based criteria
to LCTI systems governed by SDEs with statistically uncertain Ito processes at the input.  This approach uses the RMS gain of the system in terms of filtered versions of the input and output processes. The time scale $T$ of the low-pass filter parameterises Tustin's transform of the original LCTI system $F$ to its LDTI counterpart $F_T$ with the same RMS gain, though understood in the usual sense for fragments (or steady-state RMS values) of stationary Gaussian random sequences.

The resulting two-parameter norm
$\sn F\sn_{T,a}$ is defined as the  $a$-anisotropic norm $\sn F_T\sn_a$ of the effective discrete-time system, with the input mean anisotropy threshold $a$ quantifying indirectly the deviation of the spectral density of the Ito process at the input of the original system from constant scalar matrices. The computation of this norm reduces to the numerical solution of a set of matrix algebraic Riccati, Lyapunov and log-determinant equations 
developed more than 20 years ago.
Under Tustin's inverse transform, the worst-case disturbance  inherits the general feedback structure, whereby its drift depends linearly on the current state of the system.

Therefore, in combination with Tustin's direct and inverse transforms, the methods of  anisotropy-based robust performance analysis and control design, developed earlier for discrete-time systems, are applicable to continuous-time settings.  Alternative ways of extending the anisotropy-based theory to continuous-time systems, employing a sample path analysis of Gaussian diffusion processes under time discretization, will be discussed in subsequent publications.

%%%%%%%%%%%%%%%%%%%%%%%%%%%%%%%%%%%%%%%%%%%%%%%%%%%%%%%%%%%%%%%%%%%%%%%%%%%%%%%
\section*{Acknowledgements}
%%%%%%%%%%%%%%%%%%%%%%%%%%%%%%%%%%%%%%%%%%%%%%%%%%%%%%%%%%%%%%%%%%%%%%%%%%%%%%%

The original results of the discrete-time  anisotropy-based theory were obtained by the author under the support of the Russian Foundation for Basic Research grant 95-01-00447 in 1995--1997
(while the author was with the State Research Institute of Aviation Systems and the Institute
for Information Transmission Problems, the Russian Academy of Sciences)  and the
Australian Research Council grant A10027063 in 2000--2002 (while the author was with the Mathematics Department
of the University of Queensland).

%===============================================================================

%\bibliography{ifacconf}             % bib file to produce the bibliography

\begin{thebibliography}{99}
%==============================================================================
\bibitem{A_1963}
 E.M.Alfsen, A simplified constructive proof of existence and uniqueness of Haar measure, \textit{Math. Scand.}, vol. 12, 1963, pp. 106--116.
 %==============================================================================
\bibitem{BAK_2018}
A.A.Belov, O.G.Andrianova, A.P.Kurdyukov,
Control of Discrete-Time Descriptor Systems:
An Anisotropy-Based Approach, Springer, Berlin, 2018.

\bibitem{B_1998}
H.-P.Bernhard, A tight upper bound on the gain of linear
and non-linear predictors for stationary stochastic processes,
\textit{IEEE Trans. Sign. Proc.}, vol. 46, 1998,
pp. 2909--2917.


%==============================================================================
\bibitem[Bernstein \& Haddad(1989)]{BH_1989}
D.S.Bernstein, W.M.Haddad, LQG control with an $H_{\infty}$ performance bound: a Riccati equation approach,
\textit{IEEE Trans. Automat. Contr.}, vol. 34, no. 3, 1989, pp. 293--305.
%==============================================================================
\bibitem{B_1968}
P.Billingsley, \emph{Convergence of Probability Measures}, John Wiley \& Sons, New York, 1968.
%==============================================================================
\bibitem{B_2009}
C.Bissel,
A history of automatic control,
Springer Handbook of Automation, S.Y.Nof (Ed.), 2009, Springer, pp. 53--69.
%==============================================================================


 \bibitem{B_2018}
V.A.Boichenko,
The new approach to the analysis of linear control systems,
 14th International Conference Stability and Oscillations of Nonlinear Control Systems  (Pyatnitskiy's Conference) (STAB),  2018,  4 pp.

\bibitem{BK_2018}
 V.A.Boichenko, A.P.Kurdyukov,
From the anisotropy-based theory towards the $\sigma$-entropy theory,
Computing Science and Automatic Control (CCE),
Mexico City, Mexico,  September 5-7, 2018, 6 pp.

%==============================================================================
\bibitem{CR_2007}
C.D.Charalambous, F.Rezaei,
Stochastic uncertain systems subject to relative entropy constraints: Induced norms and monotonicity properties of minimax games,
\textit{IEEE Trans. Automat. Contr.}, vol. 52, no. 4, 2007, pp. 647--663.


\bibitem{C_2018}
K.R.Chernyshov, The anisotropic norm of signals: towards possible definitions,
IFAC-PapersOnLine,
vol. 51, no. 32, 2018, pp. 169--174.

\bibitem{CT_2006}
T.M.Cover, J.A.Thomas, \textit{Elements of Information Theory}, 2nd ed., Wiley, Hoboken, NJ,  2006.





\bibitem{V_1997}
P.Diamond, A.P.Kurdjukov, A.V.Semyonov, I.G.Vladimirov, Homotopy methods and anisotropy-based stochastic $\cH_\infty$-optimization of control systems, CADSMAP Research Report 97-14, 1997.
%==============================================================================
\bibitem{V_2001a}
P.Diamond, I.Vladimirov, A.Kurdjukov, A.Semyonov,
Anisotropy-based performance analysis of linear discrete time
invariant control systems, \textit{Int. J. Contr.}, vol. 74, no. 1,
2001, pp. 28--42. % [completed in 1998]

%==============================================================================
\bibitem{V_2003}
P.Diamond, P.E.Kloeden, I.G.Vladimirov, Mean anisotropy of
homogeneous Gaussian random fields and anisotropic norms of linear
translation-invariant operators on multidimensional integer
lattice, \textit{J. Appl. Math. Stoch. Anal.},
vol. 16, no. 3, 2003, pp. 209--231 (preprint: CADSMAP Research Report 02-02, 2002).
%==============================================================================
\bibitem{DGKF_1989}
J.C.Doyle, K.Glover, P.P.Khargonekar, B.A.Francis,
State-space solutions to standard $H_2$ and $H_\infty$ control problems,
\textit{IEEE Trans. Automat. Contr.}, vol. 34, no. 8, 1989, pp. 831--847.
%==============================================================================
\bibitem{DS_1958}
N.Dunford, J.T.Schwartz,  \textit{Linear Operators},  %Part I General Theory,
Interscience, New York, 1958--1971.
%==============================================================================

\bibitem{F_1985}
M.V.Fedoryuk, \textit{Ordinary Differential Equations}, 2nd Ed., Nauka, Moscow, 1985.
%==============================================================================
\bibitem{F_1987}
B.A.Francis, A \emph{Course in $H^\infty$-Control Theory}, Springer, Berlin, 1987.
%==============================================================================
\bibitem{GS_2004}
I.I.Gikhman, A.V.Skorokhod, \textit{The Theory of Stochastic Processes},
Springer, Berlin,   2004.
%==============================================================================
\bibitem{G_1960}
I.V.Girsanov,
On transforming a certain class of stochastic processes by absolutely continuous substitution of measures,
\textit{Theory Probab. Appl.},
vol. 5, no. 3, 1960, pp. 285--301.
%%==============================================================================
\bibitem{GTOP_1989}
D.-W.Gu, M.C.Tsai, S. D.O’Young,
I.Postlethwaite, State-space formulae for discrete-time $H^\infty$ optimization,
\textit{Int. J. Contr.}, vol. 49, no. 5, 1989, pp. 1683--1723.
%\bibitem[Higham(2008)]{H_2008}
%N.J.Higham,
%\textit{Functions of Matrices},
%SIAM, Philadelphia,  2008.



%==============================================================================

\bibitem[Horn \& Johnson(2007)]{HJ_2007}
 R.A.Horn, C.R.Johnson, \textit{Matrix Analysis}, Cambridge
University Press, New York, 2007.
%==============================================================================
\bibitem{KN_1986}
V.B.Kolmanovskii, V.R.Nosov, \textit{Stability of Functional Differential Equations}, Academic Press, New York, 1986.
%==============================================================================
\bibitem{K_1960}
R.E.Kalman, A new approach to linear filtering and prediction problems, \textit{J. Basic Eng.}, vol. 82, no.  1, pp. 35--45.
%==============================================================================

\bibitem{KS_1991}
I.Karatzas, S.E.Shreve,
\textit{Brownian Motion and Stochastic Calculus}, 2nd Ed., Springer, New York, 1991.
%==============================================================================
\bibitem{K_1941}
A.N.Kolmogorov, Interpolation and extrapolation of stationary random
sequences, Izv. Akad. Nauk SSSR, Ser. Mat., 1941, pp. 3--14.
%==============================================================================
\bibitem{KM_2004}
A.P.Kurdyukov, E.A.Maksimov,
Robust stability of linear discrete stationary systems with uncertainty bounded in the anisotropic norm, \textit{Autom. Remote Contr.}, vol. 65, no. 12, 2004, pp. 1977--1990.

\bibitem{KT_2017}
A.Yu.Kustov, V.N.Timin,
Suboptimal anisotropy-based control for
linear discrete time varying systems with
noncentered disturbances, 20th IFAC World Congress,  France, July 9-14, 2017, %pp. 6296--6301.
IFAC PapersOnLine, vol. 50, no. 1, 2017, pp. 6122--6127.

\bibitem{K_2018}
A.Yu.Kustov,
State-space formulas for anisotropic norm of
linear discrete time varying stochastic system, 15th International Conference on Electrical Engineering, Computing Science and Automatic Control (CCE),
Mexico City, Mexico,  September 5-7, 2018, 6 pp.


%==============================================================================
\bibitem{MKV_2011}
E.A.Maximov, A.P.Kurdyukov, I.G.Vladimirov, Anisotropic norm bounded real lemma for linear discrete time varying systems, 18th IFAC World Congress (Milan, Italy, 28 August - 2 September 2011), IFAC, 2011, pp. 4701--4706.  %http://www.ifac-papersonline.net/Detailed/49025.html
%==============================================================================
\bibitem{NK_1991}
K.M.Nagpal, P.P.Khargonekar, Filtering and smoothing in an
$H^\infty$-setting, \textit{IEEE Trans. Automat. Contr.}, vol. 36, no. 2, 1991,
pp. 152--166.

\bibitem{PUJ_2000}
I.R.Petersen, V.A.Ugrinovskii, A.V.Savkin, \emph{Robust Control Design
Using $H^\infty$-Methods}, Springer, London, 2000.


\bibitem{R_1967}
Yu.A.Rozanov, \textit{Stationary Random Processes}, Holden-Day, San-Francisco, 1967.
%%==============================================================================
\bibitem{V_1994}
A.V.Semyonov, I.G.Vladimirov, A.P.Kurdjukov, Stochastic
approach to H-infinity optimization, Proceedings of the 33rd IEEE
Conference on Decision and Control (Lake Buena Vista, Florida, USA,
14-16 Dec 1994), 3, IEEE, 1994, pp. 2249--2250.% [completed in 1994]

%==============================================================================
\bibitem{S_1992}
B.V.Shabat,
\textit{Introduction to Complex Analysis}, AMS, Providence, R.I.,  1992.
%==============================================================================
\bibitem{S_1996}
A.N.Shiryaev, \emph{Probability}, 2nd Ed., Springer, New York, 1996.

\bibitem{TKT_2011}
M.M.Tchaikovsky, A.P.Kurdyukov, V.N.Timin, Strict anisotropic norm bounded real lemma in terms of inequalities, 18th IFAC World Congress, Milano, Italy, August 28–September 2, 2011, pp. 2332--2337.

%==============================================================================
\bibitem{T_2012b}
M.M.Tchaikovsky,  Static output feedback anisotropic controller design by LMI-based approach: general and special cases, 2012 American Control Conference, Montreal, Canada, June 27-29, 2012, pp. 5208--5213.

%==============================================================================
\bibitem{TTKK_2018}
M.M.Tchaikovsky, V.N.Timin, A.Yu.Kustov, A.P.Kurdyukov,
Numerical procedures for anisotropic analysis of time-invariant systems and synthesis of suboptimal anisotropic controllers and filters,
\textit{Autom. Remote Contr.},
vol. 79, no. 1, 2018, pp. 128--144.

%==============================================================================
\bibitem{T_2013}
V.N.Timin,
Anisotropy-based suboptimal filtering for the linear discrete time-invariant systems,
\textit{Autom. Remote Contr.}, vol. 74, no. 11, 2013, pp. 1773--1785.



%==============================================================================
\bibitem{V_2002}
V.S.Vladimirov,
\textit{Methods of the Theory of Generalized Functions},
Taylor \& Francis, London, 2002.
%%==============================================================================
\bibitem{V_1995a}
I.G.Vladimirov, A.P.Kurdjukov, A.V.Semyonov, Anisotropy of signals and entropy of linear stationary systems, \emph{Doklady Maths.}, vol. 51, no. 3, 1995, pp. 388--390.

%%==============================================================================
\bibitem{V_1995b}
I.G.Vladimirov, A.P.Kurdjukov, A.V.Semyonov, A stochastic problem of $H_\infty$-optimization, \emph{Doklady Maths.}, vol. 52, no. 1, 1995, pp. 155--157.



%%==============================================================================
\bibitem{V_1996a}
I.G.Vladimirov, A.P.Kurdjukov, A.V.Semyonov, On computing the
anisotropic norm of linear discrete-time-invariant systems,
13th IFAC World Congress (San Francisco,
California USA, July 1996), IFAC, 1996, pp. 179--184.
%%==============================================================================
\bibitem{V_1996b}
I.G.Vladimirov, A.P.Kurdjukov, A.V.Semyonov, State-space
solution to anisotropy-based stochastic $H_\infty$-optimization
problem, 13th IFAC World Congress (San
Francisco, California USA, July 1996), IFAC, 1996, pp. 427--432.
%[completed in 1995]

\bibitem{V_1999}
I.G.Vladimirov, A.P.Kurdjukov, A.V.Semyonov, Asymptotics of the anisotropic norm of linear time-invariant systems, \textit{Autom. Remote Contr.}, vol. 60, no. 3, 1999, pp. 359--366.
%==============================================================================
\bibitem{V_2001b}
I.G.Vladimirov, Anisotropy-based optimal filtering in linear discrete time invariant systems, Centre for Applied Dynamical Systems, Mathematical Analysis and Probability, The University of Queensland, Brisbane, Australia, CADSMAP Research Report 01-03, November 2001, {\tt  	arXiv:1412.3010 [cs.SY], 9 December 2001}.
%==============================================================================
\bibitem{V_2001c}
I.G.Vladimirov, P.Diamond, Robust filtering in finite horizon
linear discrete time varying systems by minim um anisotropic norm
criterion, CADSMAP Research Report 01-05, 2001. %[completed in 2001]

%==============================================================================
\bibitem{V_2006}
I.G.Vladimirov, P.Diamond, P.E.Kloeden, Anisotropy-based robust performance analysis of finite horizon linear discrete time varying systems, \textit{Autom. Remote Contr.}, vol. 67, no. 8, 2006, pp. 1265--1282 (preprint: CADSMAP
Research Report 01-01, 2001).
%==============================================================================
\bibitem{V_2008}
A.P.Kurdyukov, I.G.Vladimirov, Propagation of mean anisotropy of signals in filter connections, 17th IFAC World Congress (Seoul, South Korea, 6-11 July 2008), IFAC, 2008, pp. 6313–6318. % http://www.ifac-papersonline.net/Detailed/36775.html
%%==============================================================================
\bibitem{W_1949}
N.Wiener,  Extrapolation, Interpolation, and Smoothing of
Stationary Time Series,  New York: Wiley, 1949.
%%==============================================================================
\bibitem{W_1972}
G.T.Wilson, The factorization of matricial spectral densities,
\emph{SIAM J. Appl. Math.}, vol. 23, no. 4, 1972, pp. 420--426.

%%==============================================================================
\bibitem{Z_1981}
G.Zames, Feedback and optimal sensitivity: model reference transformations, multiplicative
seminorms and approximate inverses, \textit{IEEE Trans. Automat. Contr.}, vol.  26, 1981,
pp. 301--320.

\end{thebibliography}
                                                     % with bibtex (preferred)

%%%%%%%%%%%%%%%%%%%%%%%%%%%%%%%%%%%%%%%%%%%%%%%%%%%%%%%%%%%%%%%%%%%%%%%%%%%%%%%

%===============================================================================

%\appendix

\end{document}